  \CheckCommand*\refstepcounter[1]{\stepcounter{#1}%
      \protected@edef\@currentlabel
       {\csname p@#1\endcsname\csname the#1\endcsname}%
  }
  \renewcommand*\refstepcounter[1]{\stepcounter{#1}%
    \protected@edef\@currentlabel
      {\csname p@#1\expandafter\endcsname\csname the#1\endcsname}%
  }
  \def\labelformat#1{\expandafter\def\csname p@#1\endcsname##1}
  \DeclareRobustCommand\Ref[1]{\protected@edef\@tempa{\ref{#1}}%
     \expandafter\MakeUppercase\@tempa
  }
  \newcommand{\numberlike}[2]{%
     \expandafter\def\csname c@#1\endcsname{%
         \expandafter\csname c@#2\endcsname}%
  }
  \def\DefaultNumberTheoremWithin{section}
\def\aA{{\mathcal A}}
\def\cC{{\mathcal C}}
\def\zZ{{\mathcal Z}}
\def\mM{{\mathcal M}}
\def\RR{{\mathbb R}}
\def\KK{{\mathbb K}}
\def\CC{{\mathbb C}}
\def\Crit{{\mathrm{Crit}}}
\def\link{{\mathrm{link}}}
\def\sgn{{\mathrm{sgn}}}
\def\supp{{\mathrm{supp}}}
\def\star{{\mathrm{star}}}
\def\hocolim{{\mathrm{hocolim}}}
\def\colim{{\mathrm{colim}}}
\def\cells{{\bullet}}
\def\ncell{{\bullet}}
\def\dual{{\circ}}
\def\join{{\ast}}
\def\Tor{{\mathrm{Tor}}}
\def\Homology{{\mathrm{H}}}
\def\pt{{\mathrm{pt}}}
\def\id{{\mathrm{id}}}
\theoremstyle{plain}
  \newtheorem{Lemma}{Lemma}
     \numberwithin{Lemma}{\DefaultNumberTheoremWithin}
     \numberwithin{Claim}{\DefaultNumberTheoremWithin}
  \newtheorem{Theorem}{Theorem}
     \numberwithin{Theorem}{\DefaultNumberTheoremWithin}
  \newtheorem{Corollary}{Corollary}
     \numberwithin{Corollary}{\DefaultNumberTheoremWithin}
  \newtheorem{Proposition}{Proposition}
     \numberwithin{Proposition}{\DefaultNumberTheoremWithin}
     \numberwithin{Conjecture}{\DefaultNumberTheoremWithin}
  \theoremstyle{definition}
     \numberwithin{Definition}{\DefaultNumberTheoremWithin}
  \theoremstyle{definition}
     \numberwithin{Question}{\DefaultNumberTheoremWithin}
  \theoremstyle{definition}
     \numberwithin{Problem}{\DefaultNumberTheoremWithin}
  \theoremstyle{remark}
  \newtheorem{Remark}{Remark}
     \numberwithin{Remark}{\DefaultNumberTheoremWithin}
  \theoremstyle{remark}
  \newtheorem{Example}{Example}
     \numberwithin{Example}{\DefaultNumberTheoremWithin}
     \numberwithin{Case}{Lemma}
     \numberwithin{Step}{Lemma}
\begin{document}

\title{Discrete Morse theory for moment-angle complexes of pairs $(D^n,S^{n-1})$}

\author{Vladimir Gruji\'c}
  \address{Faculty of Mathematics, Belgrade University, Studentski trg 16, 11000 Belgrade, Serbia}
  \email{vgrujic@matf.bg.ac.rs}

\author{Volkmar Welker}
  \address{Philipps-Universit\"at Marburg, Fachbereich Mathematik und Informatik, Hans-Meerwein-Strasse, 35032 Marburg, Germany}
  \email{welker@mathematik.uni-marburg.de}

\thanks{First author was supported by a DAAD postdoctoral fellowship and by Ministry of Science of Republic of Serbia,
  project 174034}

\begin{abstract}
  For a finite simplicial complex $K$ and a CW-pair $(X,A)$, there
  is an associated CW-complex $\zZ_K(X,A),$ known as a
  polyhedral product. We apply discrete Morse theory to a
  particular CW-structure on the $n$-sphere
  moment-angle complexes $\zZ_K(D^{n}, S^{n-1})$.
  For the class of simplicial complexes with vertex-decomposable
  duals, we show that the associated $n$-sphere moment-angle complexes
  have the homotopy type of wedges of spheres. As a corollary we show that
  a sufficiently high suspension of any restriction of a simplicial complex with
  vertex-decomposable dual is homotopy equivalent to a wedge of spheres.
\end{abstract}

\keywords{simplicial complex, $n$-sphere moment-angle complex, coordinate subspace arrangement, discrete Morse theory}

\subjclass{55P15 52C35 05E45}


\maketitle
\section{Introduction}

  In this paper we study the following construction of topological spaces.
  Let $K$ be a simplicial complex $K$ over ground set $[m]:= \{1,\ldots, m\}$ and let $(X,A)$ be a
  pair of spaces. For a simplex $\sigma\in K$ set
  \[(X,A)^{\sigma}:=\{(x_1,\ldots,x_m)\in X^{m}\mid x_i\in A,
  i\notin\sigma\}.\] and define $\zZ_K(X,A) \subseteq X^m$ as
  \[\zZ_K(X,A)=\bigcup_{\sigma\in K}(X,A)^{\sigma}\]
  equipped with the subspace topology of the Cartesian product
  $X^{m}$. The space $\zZ_K(X,A)$ is called the
  {\it polyhedral product} of $K$ with respect to the pair $(X,A)$.

  Polyhedral products are a natural generalization of Cartesian products
  and appear prominently in various areas of mathematics
  (see \cite{BP}, \cite{GT1}, \cite{GT2}, \cite{GPTW}, \cite{IK} and \cite{BBCG} for references and details).
  Despite its simple combinatorial definition the topology of $\zZ_K(X,A)$ is
  hard to control. The most far reaching general results on the homotopy type can be found in
  \cite{GT2} where a wedge decomposition in terms of the homotopy type of induced
  subcomplexes of $K$ is given for shifted simplicial complexes $K$ and
  pairs $(CX,X)$ of a cone $CX$ over $X$, and in \cite{GPTW} where among others it is proved that
  for simplicial complexes $K$ whose non-faces form a chordal graph $\zZ_K(D^2,S^{1})$ has the homotopy
  type of a wedge of spheres.
  Of particular interest is the case when $(X,A) = (D^n,S^{n-1})$ for some
  number $n \geq 1$, where $D^n$ is the $n$-dimensional disk and $S^{n-1}$
  its bounding $(n-1)$-sphere.
  Among those the case $n=2$ plays the most important role since it relates to torus action (see \cite{BP})
  and coordinate subspace arrangements (see Section \ref{sec:basics}).
  Extending standard terminology from the case $n = 2$ we call the space $\zZ_K(D^{n}, S^{n-1})$,
  the {\it $n$-sphere moment-angle complex}.

  We extend the result from \cite{GT2} (see also \cite{IK}) in the case $(X,A) = (D^n,S^{n-1})$
  from the class of shifted simplicial complexes to the class of simplicial complexes $K$ for
  which its {\it Alexander dual}  $K^\dual :=
  \{\sigma\subset[m]\mid[m]\setminus\sigma\notin K\}$
  is vertex decomposable.
  We also refer the reader to Section \ref{sec:vertex} for unexplained
  terminology.  Our main result states:

  \begin{Theorem}\label{t1}
    Let $K$ be a simplicial complex on ground set $[m]$
    and $\{i\} in K$ for all $i \in [m]$
    with vertex decomposable Alexander dual complex $K^\dual$.
    Then the $n$-sphere moment angle complex
    $\zZ_K(D^{n}, S^{n-1})$ has a homotopy type of the wedge
    of spheres

    \begin{eqnarray} \label{eq:wedge}
     \zZ_K(D^{n}, S^{n-1}) & \simeq & \bigvee_{i \geq 0}
        \bigvee_{{{M \not\in K} \atop {M \subseteq [m]}}}
          \dim_\KK \widetilde{\Homology}_{i-(n-1)\#M -1} (K_M ,\KK) \cdot S^i
    \end{eqnarray}

     for any field $\KK$, where $K_M$ is the restriction of $K$ to $M$.
  \end{Theorem}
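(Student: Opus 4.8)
The plan is to combine the explicit CW-structure on $\zZ_{K}(D^{n},S^{n-1})$ with discrete Morse theory, fed by a vertex decomposition of $K^{\dual}$.

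\emph{Step 1 (cellular chain complex and additive formula).} Equip $D^{n}$ with the CW-structure having three cells $d_{0}\prec a\prec d$ of dimensions $0$, $n-1$ and $n$, so that $S^{n-1}=d_{0}\cup a$ is a subcomplex (for $n=1$ use the two vertices and the edge of an interval). In the product CW-structure on $(D^{n})^{m}$ the cells of $\zZ_{K}(D^{n},S^{n-1})$ are the pairs $(\sigma,\tau)$ of disjoint subsets of $[m]$ with $\sigma\in K$, with $\dim(\sigma,\tau)=n\cdot\#\sigma+(n-1)\cdot\#\tau$. Since $\partial d=\pm a$ and $\partial a=\partial d_{0}=0$ in $C_{\ast}(D^{n})$, the cellular boundary of $(\sigma,\tau)$ is an alternating sum of the cells $(\sigma\setminus\{i\},\tau\cup\{i\})$, $i\in\sigma$; hence $C_{\ast}(\zZ_{K}(D^{n},S^{n-1}))$ splits as the direct sum, over $M\subseteq[m]$, of the subcomplexes spanned by the cells of \emph{support} $\sigma\cup\tau=M$, and the $M$-th summand is — after shifting degrees by $(n-1)\#M+1$ — the augmented simplicial chain complex of the restriction $K_{M}$. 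This yields
\[\widetilde{\Homology}_{i}\bigl(\zZ_{K}(D^{n},S^{n-1});\KK\bigr)\ \cong\ \bigoplus_{M\subseteq[m]}\widetilde{\Homology}_{\,i-(n-1)\#M-1}(K_{M};\KK),\]
the numerical content of \eqref{eq:wedge}; what remains is to promote this to the stated homotopy equivalence.

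\emph{Step 2 (inherited vertex decomposability and a Morse matching).} The Alexander-duality identities $\link_{K^{\dual}}(v)=(K_{[m]\setminus v})^{\dual}$ and $\mathrm{del}_{K^{\dual}}(v)=(\link_{K}(v))^{\dual}$ (duals over $[m]\setminus\{v\}$) show, by induction on $\#M$, that $(K_{M})^{\dual}$ is vertex decomposable, hence shellable, for every $M\subseteq[m]$; so its homology — and, by Alexander duality, that of $K_{M}$ — is free with Betti numbers independent of $\KK$. A shelling of $(K_{M})^{\dual}$ produces a perfect acyclic matching which, by combinatorial Alexander duality, transfers to a perfect acyclic matching on the cells of $\zZ_{K}(D^{n},S^{n-1})$ of support $M$. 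Since $(\sigma,\tau)\mapsto\sigma\cup\tau$ is order preserving onto the Boolean lattice on $[m]$, the Cluster/Patchwork Lemma lets one amalgamate these matchings into an acyclic matching on $\zZ_{K}(D^{n},S^{n-1})$; as $\partial a=0$, any gradient path that strictly decreases the support carries a vanishing incidence number, so the Morse complex again splits along the support, its $M$-block computing $\widetilde{\Homology}_{\ast-(n-1)\#M-1}(K_{M})$. Hence $\zZ_{K}(D^{n},S^{n-1})$ is homotopy equivalent to a CW-complex with a single $0$-cell and exactly $\dim_{\KK}\widetilde{\Homology}_{\,i-(n-1)\#M-1}(K_{M};\KK)$ further $i$-cells coming from each non-face $M$.

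\emph{Step 3 (promotion to a wedge of spheres; the main obstacle).} For $n\ge 2$ all positive-dimensional cells above lie in dimension $\ge 2n-1\ge 3$, so $\zZ_{K}(D^{n},S^{n-1})$ is simply connected; but free homology together with simple connectivity does not by itself force a wedge of spheres, so one must control the attaching maps. I would do this by a second induction on $m$ using the polyhedral-product pushout
\[\zZ_{K}(D^{n},S^{n-1})=\bigl(\zZ_{\mathrm{del}_{K}(v)}(D^{n},S^{n-1})\times S^{n-1}\bigr)\ \cup_{\ \zZ_{\link_{K}(v)}(D^{n},S^{n-1})\times S^{n-1}}\ \bigl(\zZ_{\link_{K}(v)}(D^{n},S^{n-1})\times D^{n}\bigr),\]
where $v$ is a shedding vertex of $K^{\dual}$, so that $\mathrm{del}_{K}(v)=K_{[m]\setminus v}$ and $\link_{K}(v)$ both have vertex-decomposable Alexander duals by Step 2. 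As $\zZ_{\link_{K}(v)}(D^{n},S^{n-1})\times D^{n}$ deformation retracts onto $\zZ_{\link_{K}(v)}(D^{n},S^{n-1})$, collapsing $\zZ_{\link_{K}(v)}(D^{n},S^{n-1})\times S^{n-1}$ by projection, $\zZ_{K}(D^{n},S^{n-1})$ is the homotopy pushout of $\zZ_{\mathrm{del}_{K}(v)}(D^{n},S^{n-1})\times S^{n-1}\xleftarrow{\ \iota\times\mathrm{id}\ }\zZ_{\link_{K}(v)}(D^{n},S^{n-1})\times S^{n-1}\to\zZ_{\link_{K}(v)}(D^{n},S^{n-1})$, with $\iota$ induced by $\link_{K}(v)\subseteq\mathrm{del}_{K}(v)$. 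The crux — and the step I expect to be the hardest — is to show that $\iota$ is, up to homotopy, the inclusion of a sub-wedge, so that the cone-off turns each sphere of $\zZ_{\mathrm{del}_{K}(v)}(D^{n},S^{n-1})$ not already present in $\zZ_{\link_{K}(v)}(D^{n},S^{n-1})$ into a wedge summand together with its $(n-1)$-fold suspension, while the torus factors that $\zZ_{\link_{K}(v)}(D^{n},S^{n-1})$ can carry — arising from ghost vertices of $\link_{K}(v)$ — get absorbed; keeping track of these ghost vertices forces the induction hypothesis to be relativized, the conclusion there being a product of $\bigvee_{i}S^{i}$ with a torus $(S^{n-1})^{\times g}$. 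This is exactly where the shedding condition enters essentially. The case $n=1$ (where $\zZ_{K}(D^{1},S^{0})$ need not be simply connected) is treated directly from the support splitting of $C_{\ast}$: with $n-1=0$ the Morse complex has vanishing differential and $\zZ_{K}(D^{1},S^{0})$ is read off at once. Comparing the cell counts of Step 2 with the right-hand side of \eqref{eq:wedge} then finishes the proof.
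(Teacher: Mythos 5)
Your Step 1 is a clean and valid derivation of the additive content of \eqref{eq:wedge}: the minimal CW-structure on $D^{n}$ makes the cellular chain complex of $\zZ_{K}(D^{n},S^{n-1})$ split by support, and the $M$-block is the shifted reduced simplicial chain complex of $K_{M}$. This is essentially the Bahri--Bendersky--Cohen--Gitler/Hochster decomposition re-derived from scratch; the paper instead cites \ref{pr:susp} together with Hochster's formula for this. So far so good, as a homology-level statement.

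The difficulties begin in Steps 2 and 3, and the one in Step 3 is the real gap. First, a technical point: the CW-structure you chose on $D^{n}$ is not regular for $n\ge2$ (the single $(n-1)$-cell of $S^{n-1}$ is attached by a constant map), so Forman's theorem in the form invoked by the paper (\ref{t3}, following Chari) does not apply to it. The paper deliberately uses the regular structure on $D^{n}$ with two cells $e_{-}^{i},e_{+}^{i}$ in each dimension $0\le i\le n-1$ plus one top cell, precisely so that the discrete Morse machinery (and the explicit description of the critical cells in \ref{p2}, \ref{c1}) is available. Your matching-by-shelling-and-duality sketch on the non-regular complex would need a different (and harder) version of discrete Morse theory, and in any case a Morse chain complex with zero differential only controls homology, not the attaching maps.

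The decisive gap is in Step 3. You correctly identify the homotopy pushout
$\zZ_{K\setminus m}\times S^{n-1}\leftarrow\zZ_{\link_{K}(m)}\times S^{n-1}\rightarrow\zZ_{\link_{K}(m)}\times D^{n}$
and correctly say that the crux is controlling the map $\iota\colon\zZ_{\link_{K}(m)}\hookrightarrow\zZ_{K\setminus m}$. But you stop there, proposing to show $\iota$ is ``the inclusion of a sub-wedge'' and noting this is ``exactly where the shedding condition enters essentially,'' without saying how. The paper's key contribution (\ref{pr:trivial}) is stronger and cleaner: the inclusion is null-homotopic. This is proved concretely using the Morse-theoretic analysis of critical cells: \ref{cor:nonface} shows that $\{J(c)\}\cup\supp(c)$ is a minimal non-face of $\link_{K}(m)$ for each critical cell $c$, and the shedding condition (no facet of $\link_{K^{\dual}}(m)$ is a facet of $K^{\dual}\setminus m$) then forces $\{J(c)\}\cup\supp(c)$ to be a face of $K\setminus m$, which lets one enlarge $\zZ_{\link_{K}(m)}$ inside $\zZ_{K\setminus m}$ to a contractible subcomplex $Y$ by adding one cell $c^{\ncell}$ per non-trivial critical $c$. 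Without an argument of this sort the induction cannot close: a simply connected CW-complex with free homology concentrated in the cells you exhibit need not be a wedge of spheres, and nothing in your Steps 1--2 controls the attaching maps of the Morse complex. This is the missing idea, and supplying it is essentially re-proving \ref{pr:crucial}, \ref{cor:nonface} and \ref{pr:trivial}.
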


  The intuition for the theorem comes from a result by Buchshta\-ber and Panov
  \cite[Thm. 7.7]{BP}. There they prove that the cohomology
  ring of $\zZ_K(D^{2}, S^{1})$ is isomorphic to the
  $\Tor$-algebra of the Stanley-Reisner ring of the simplicial complex $K$ extending
  an additive isomorphism from \cite{GPW}. From \cite{HRW} we know that
  if $K^\dual$ is a (non-pure) shellable simplicial complex (see \cite{BW1})
  and the Stanley-Reisner
  ideal of $K$ is generated in degrees $\geq 2$ (i.e. $\{i\} \in K$ for all $i \in [m]$)
  then
  the Stanley-Reisner ring of $K$ is Golod over all fields.
  The Golod property \cite{GL} says that all Massey operation on the Koszul complex
  vanish -- for which the first is the product on the $\Tor$-algebra.
  By the work of Berglund and J\"ollenbeck \cite{BJ} for
  Stanley-Reisner rings the Golod property actually is equivalent to the vanishing
  of the product on the $\Tor$-algebra.
  Since vertex-decomposable simplicial complexes are shellable \cite[Thm. 11.3]{BW} and
  hence sequentially Cohen-Macaulay it follows
  from \cite[Thm. 4]{HRW} that
  the product on the cohomology ring of $\zZ_K(D^{2}, S^{1})$ must be trivial
  if $K^\dual$ is vertex-decomposable. Indeed in \cite[Thm. 6]{B} it is proved
  that $\zZ_K(D^{2}, S^{1})$ is rationally homotopy equivalent to a wedge
  of spheres if and only if $K$ the rational Stanley-Reisner ring of $K$
  is Golod. But this provides insight only for the
  case $n =2$. The fact that $\zZ_K(D^{n}, S^{n-1})$ is homotopy equivalent
  to a wedge of spheres for all $n \geq 1$ for $K$ with vertex-decomposable dual
  remains as mysterious as the previous
  results from \cite{GT2} for the subclass of shifted simplicial complexes and pairs
  $(CX,X)$, where $CX$ is the cone over $X$.
  Nevertheless, the homotopy equivalence from \ref{t1} is known to hold in the stable category
  using a single suspension only.
  It follows from \cite[Cor. 2.24]{BBCG} (see \ref{pr:susp}) below) that
  for any $K$ and any $(X,A)$ the suspension of $\zZ_K(X,A)$ has  a homotopy type
  given by the suspension of formula from \ref{t1}.

  A simple calculation shows that the Alexander dual of a shifted simplicial complex is also shifted.
  Thus by \cite[Thm. 11.3]{BW} shifted simplicial complexes have vertex decomposable Alexander dual complexes.
  Hence the result of \ref{t1} contains the main result of \cite{GT2} for the important case
  $(CX,X) = (D^n,S^{n-1})$. For the subclass of skeleta $K$ of simplices it was
  previously shown in \cite{GT1} that the moment-angle complexes
  $\zZ_K(D^{2},S^{1})$ have the homotopy type of wedges of
  spheres.

  Finally, using a result by Eagon and Reiner \cite[Prop. 8]{ER}
  we obtain as a corollary the following slight extension of a result from \cite{GPTW}.
  which is the main new result of their Theorem 4.6.
  For its formulation we denote by $K^{(1)}$ the {\it $1$-skeleton} of the simplicial complex $K$
  which in turn can be considered a graph.
  Now $K^{(1)}$ is called {\it chordal} if all cycles of length
  $\geq 4$ have a chord.
  Recall that a simplicial complex $K$ if called {\it flag} is all its minimal
  non-faces are of size $2$.

  \begin{Corollary}[see Theorem 4.6 \cite{GPTW}] \label{co:flag}
    Let $K$ be a flag simplicial complex such that $K^{(1)}$ is a chordal graph.
    Then $\zZ_K(D^n,S^{n-1})$ is homotopy equivalent to a wedge of spheres.
  \end{Corollary}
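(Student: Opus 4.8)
The plan is to deduce the corollary from Theorem~\ref{t1}. By that theorem it suffices to check that the Alexander dual $K^\dual$ of a flag complex $K$ with chordal $1$-skeleton $G:=K^{(1)}$ is vertex decomposable, since then the conclusion of Theorem~\ref{t1} applies verbatim and exhibits $\zZ_K(D^n,S^{n-1})$ as a wedge of spheres. The remaining hypothesis of Theorem~\ref{t1}, that $\{i\}\in K$ for every $i\in[m]$, is automatic here: a missing vertex would be a minimal non-face of size $1$, contradicting flagness.

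To show that $K^\dual$ is vertex decomposable I would induct on $m$, using the standard behaviour of Alexander duality under deletion and link, namely $\mathrm{del}_{K^\dual}(v)=(\link_K(v))^\dual$ and $\link_{K^\dual}(v)=(\mathrm{del}_K(v))^\dual$, the duals on the right being formed over $W:=[m]\setminus\{v\}$. If $G$ is complete then $K$ is a simplex and $K^\dual$ is void, so I may assume it is not and choose a simplicial vertex $v$ of $G$ (equivalently, the first vertex of a perfect elimination ordering); since $G$ is not complete and $N_G(v)$ is a clique, $v$ has a non-neighbour. Flagness of $K$ forces $\link_K(v)=\Delta_{N_G(v)}$, the full simplex on the neighbourhood of $v$; dualising over $W$ and setting $T:=W\setminus N_G(v)\neq\emptyset$ gives $\mathrm{del}_{K^\dual}(v)=\Delta_{N_G(v)}\join\partial\Delta_{T}$, a join of a simplex with the (possibly degenerate) boundary of a simplex, hence vertex decomposable. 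On the other hand $\mathrm{del}_K(v)$ is the clique complex of $G\setminus v$, which is again flag with chordal $1$-skeleton, now on $m-1$ vertices, so $\link_{K^\dual}(v)=(\mathrm{del}_K(v))^\dual$ is vertex decomposable by the inductive hypothesis.

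The remaining point is the shedding condition: every facet of $\mathrm{del}_{K^\dual}(v)$ must be a facet of $K^\dual$. Here one uses that, $K$ being flag, $K^\dual$ is pure of dimension $m-3$, its facets being the complements of the non-edges of $G$; and $\mathrm{del}_{K^\dual}(v)=\Delta_{N_G(v)}\join\partial\Delta_T$ is likewise pure, of dimension $|N_G(v)|+|T|-2=m-3$, so each of its facets is a face of $K^\dual$ of maximal possible size, hence a facet of $K^\dual$. This completes the induction, and with Theorem~\ref{t1} the corollary follows. This induction is essentially the content of \cite[Prop.~8]{ER}, which may also be invoked directly: by Fr\"oberg's theorem chordality of $K^{(1)}$ is equivalent to $I_K$ having a linear resolution, equivalently, by Eagon--Reiner, to $K^\dual$ being Cohen--Macaulay, and \cite[Prop.~8]{ER} upgrades this to vertex decomposability.

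Once Theorem~\ref{t1} is granted there is no substantial obstacle; the one step needing care is the shedding verification above — equivalently, checking that a perfect elimination ordering of $K^{(1)}$ dualises to a shedding order for $K^\dual$ — together with the bookkeeping of the degenerate cases ($G$ complete, or with isolated vertices), where $K^\dual$ collapses to a void or irrelevant complex.
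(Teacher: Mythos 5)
Your argument is correct, and at the top level it follows the same route as the paper: reduce to Theorem~\ref{t1} after observing that flagness forces $\{i\}\in K$ for all $i$, then verify that $K^\dual$ is vertex decomposable. The paper dispenses with the second step in one line by citing \cite[Prop.~8]{ER}; you instead supply a self-contained induction that amounts to a re-derivation of that proposition, which you also acknowledge as an alternative. Your induction is sound: the identities $K^\dual\setminus v=(\link_K(v))^\dual$ and $\link_{K^\dual}(v)=(K\setminus v)^\dual$ match the paper's \ref{rem:general}, the identification $\link_K(v)=\Delta_{N_G(v)}$ for a simplicial vertex $v$ of $G$ correctly uses flagness, the computation $(\Delta_{N_G(v)})^\dual=\Delta_{N_G(v)}\join\partial\Delta_T$ over $W=[m]\setminus\{v\}$ is right (and $T\neq\emptyset$ precisely because $G$ is not complete), induced subgraphs of chordal graphs are chordal so the inductive hypothesis applies to $\link_{K^\dual}(v)=(K\setminus v)^\dual$, and the shedding condition is cleanly disposed of by purity since every facet of $K^\dual$ and of $K^\dual\setminus v$ has size $m-2$. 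The only blemish is in the final parenthetical: the chain ``chordal $\Leftrightarrow$ $I_K$ linear $\Leftrightarrow$ $K^\dual$ Cohen--Macaulay, and \cite[Prop.~8]{ER} upgrades this to vertex decomposability'' reads as if Cohen--Macaulayness were being promoted to vertex decomposability, which is false in general; \cite[Prop.~8]{ER} proves vertex decomposability for the chordal case directly (essentially via the perfect-elimination ordering you use), not by upgrading the CM property. That wording should be tightened, but it does not affect the validity of your main argument, which is exactly what the citation hides.
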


  We note that \cite[Prop. 7]{ER} implies that if the minimal non-faces of $K$
  are supported on the bases of a matroid then $K^\dual$ is vertex-decomposable.
  Similarly, the work of Billera and Provan \cite{PB} shows that
  quite a large class of those simplicial complexes for which $K^\dual$ is the boundary complex of a
  simplicial polytope satisfies the assumptions of \ref{t1}. Thus in either case we
  obtain that $\zZ_K(D^n,S^{n-1})$ is homotopy equivalent to a wedge of spheres.

  In contrast to \cite{GT1} and \cite{GT2}, where more advanced tools from
  homotopy theory are invoked, our methods are combinatorial and elementary.
  We apply discrete Morse theory \cite{F} to a suitably chosen regular cell
  decomposition and some basic lemmas from homotopy theory. Indeed, we show in
  \ref{co:appl} (iii) that the critical cells arising from our application of
  discrete Morse theory all contribute a sphere. Since by \ref{t2} the space
  $\zZ_K(D^{n}, S^{n-1})$ is homotopy equivalent to a
  complement of a subspace arrangement this shows that our application of discrete Morse
  theory yields results that are similar in spirit to the ones in \cite{SS} and \cite{D}.
  There a cell decomposition for complements of arbitrary complex hyperplane arrangement
  is constructed such that the cells from a basis of homology.

  Finally, as an interesting consequence in geometric combinatorics we obtain the
  following corollary of \ref{t1}.

  \begin{Corollary}
     \label{co:susp}
     Let $K$ be a simplicial complex on ground set $[m]$ with vertex decomposable Alexander dual complex $K^\dual$.
     Then for any $M \subseteq [m]$ the $(\# M +2)$-fold suspension $\Sigma^{\#M +2} K_M$ of
     the restriction $K_M$ of $K$ to $M$ is homotopy equivalent to a wedge of
     spheres.
  \end{Corollary}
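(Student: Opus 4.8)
The plan is to apply Theorem \ref{t1} to a modified simplicial complex and then read off information about the suspension of $K_M$ from the wedge decomposition. First I would reduce to the case where $\{i\} \in K$ for all $i \in [m]$: if $K$ does not already satisfy this, I replace $K$ by $K' := K \cup \{\{i\} : i \in [m]\}$. One checks that $K^\dual$ and $(K')^\dual$ differ only by the removal of some facets $[m] \setminus \{i\}$, i.e.\ $(K')^\dual$ is obtained from the vertex-decomposable complex $K^\dual$ by deleting a face; since vertex-decomposability is inherited by such modifications (or more simply, by passing to the appropriate deletion/link in the vertex-decomposition recursion), $(K')^\dual$ remains vertex decomposable, and $K'_M = K_M$ whenever $M$ already contains all singletons it meets — so the statement for $K_M$ follows from the statement for $K'$. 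Thus I may assume $\{i\} \in K$ for all $i$, and Theorem \ref{t1} applies.

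The heart of the argument is the case $M = [m]$. Take $n = 2$ in Theorem \ref{t1}. Then the right-hand side of \eqref{eq:wedge} is a wedge of spheres with the sphere $S^i$ appearing with multiplicity $\sum_{N \subseteq [m],\, N \notin K} \dim_\KK \widetilde{\Homology}_{i - \#N - 1}(K_N, \KK)$. The idea is that the top-dimensional contributions — those coming from the full non-face $N = [m]$ (note $[m] \notin K$ since $K$ is a complex on $[m]$, assuming $K \ne 2^{[m]}$; the trivial case is handled separately) — are exactly the reduced homology groups $\widetilde{\Homology}_{i - m - 1}(K, \KK) = \widetilde{\Homology}_{j}(K,\KK)$ with $i = j + m + 1$. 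Since by Theorem \ref{t1} the space $\zZ_K(D^2, S^1)$ is homotopy equivalent to a wedge of spheres, its reduced homology is free and its homotopy type is determined by its homology; comparing Betti numbers, the homology of $\Sigma^{m+1} K$ — namely $\widetilde{\Homology}_{j}(\Sigma^{m+1}K) = \widetilde{\Homology}_{j - m - 1}(K)$ — is a direct summand of $\widetilde{\Homology}_*(\zZ_K(D^2,S^1))$. However, this only shows the homology of a high suspension of $K$ is free; to get that the suspension itself is a wedge of spheres I want to exhibit it as a retract, or better, directly as a subspace/quotient, of $\zZ_K(D^2,S^1)$.

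The cleaner route, which I would actually take, is to invoke the suspension splitting. Proposition \ref{pr:susp} (from \cite{BBCG}) says $\Sigma \zZ_K(X,A) \simeq \Sigma\bigl(\bigvee_{M \subseteq [m]} |(X,A)^{(M)}| \bigr)$-type wedge; in the case $(X,A) = (D^n, S^{n-1})$ the $M$-th wedge summand is the $\#M(n-1)+?$-fold suspension of $K_M$ (this is the standard identification $\widehat{Z}_{K_M}(D^n,S^{n-1}) \simeq \Sigma^{(n-1)\#M}|K_M|$, up to indexing). Combining this with Theorem \ref{t1}: since $\zZ_K(D^n,S^{n-1})$ is already a wedge of spheres, so is $\Sigma \zZ_K(D^n,S^{n-1})$, and hence each wedge summand $\Sigma \cdot \Sigma^{(n-1)\#M + 1}|K_M|$ (a retract of a wedge of spheres, up to suspension) is a wedge of spheres — here one uses that a suspension that is a retract of a wedge of spheres is itself a wedge of spheres, since its homology is free and suspensions of CW-complexes with free homology split. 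Choosing $n$ so that $(n-1)\#M + 2 = \#M + 2$, i.e.\ $n = 2$, gives that $\Sigma^{\#M + 2} K_M$ is homotopy equivalent to a wedge of spheres. The indexing bookkeeping in matching the exponent "$\#M + 2$" against the shift in \eqref{eq:wedge} and in \ref{pr:susp} is the one place demanding care, and identifying the $M$-summand of the \cite{BBCG} splitting with $\Sigma^{(n-1)\#M}K_M$ is the main technical input; everything else is formal manipulation of wedges of spheres.
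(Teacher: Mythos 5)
Your final argument (the ``cleaner route'') is exactly the paper's proof: set $n=2$ in \ref{pr:susp} to get $\Sigma\zZ_K(D^2,S^1)\simeq\bigvee_{M\notin K}\Sigma^{\#M+2}K_M$, invoke \ref{t1} to see the left-hand side is a wedge of spheres, and conclude each wedge summand is too. You are also right to flag that \ref{co:susp} omits the hypothesis $\{i\}\in K$ for all $i$ that \ref{t1} requires; the paper applies \ref{t1} without comment on this point. (The quickest repair is not to enlarge $K$ but to shrink the ground set: if $\{i\}\notin K$ then $i$ is a ghost vertex, $K_M=K_{M\setminus\{i\}}$ whenever $i\in M$, and one may delete $i$, using that $(K\setminus i)^\dual=\link_{K^\dual}(i)$ is again vertex decomposable; a further suspension of a wedge of spheres is still a wedge of spheres.) One justification you offer is, however, wrong: ``suspensions of CW-complexes with free homology split'' is false --- $\Sigma\,\CC P^2$ has free homology but a nontrivial Steenrod square $Sq^2$, so it is not a wedge of spheres. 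The fact you actually need, and which your phrase ``a retract of a wedge of spheres'' correctly points at, is that a \emph{simply connected retract of a wedge of spheres} is a wedge of spheres: use the retraction together with the surjectivity of the Hurewicz map for a wedge of spheres to realize a free homology basis of $\Sigma^{\#M+2}K_M$ by maps from spheres, then apply the Whitehead theorem. Since $\#M+2\ge 2$, each summand is simply connected and this applies.
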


  The paper is organized as follows.
  In Section \ref{sec:basics}, we provide basic facts about
  vertex decomposable simplicial complexes and polyhedral products.
  In particular, we describe the relation of polyhedral products and
  coordinate subspace arrangements.
  In Section \ref{sec:dimo}, we first review Forman's discrete
  Morse theory \cite{F} in terms of acyclic matching. Then for an
  arbitrary simplicial complex $K$, we construct the matching
  $\mM_K$ corresponding to an $n$-sphere polyhedral product
  complex $\zZ_K(D^{n}, S^{n-1})$ and identify the set of
  critical cells $\Crit(\mM_K)$. In Section \ref{sec:proof}, we use the
  results from Section \ref{sec:dimo} to complete the proofs of
  \ref{t1}, \ref{co:flag} and \ref{co:susp}.

\section{Basics about vertex decomposable complexes and polyhedral products}
  \label{sec:basics}

  \subsection{Vertex Decomposable Simplicial Complexes}
  \label{sec:vertex}

  First we recall some basic terminology for simplicial complexes.
  Let $K \subseteq 2^{[m]}$ be a simplicial complex over the
  ground set $[m]$. The elements of $K$ are called {\it faces} and the inclusionwise
  faces are called {\it facets} of $K$. For $M \subseteq [m]$ we write $K_M :=
  \{ \sigma \in K~|~\sigma \subseteq M\}$ for the
  {\it restriction} of $K$ to $M$.
  For $v \in [m]$ we denote by
  $K\setminus v := K|_{[m] \setminus \{v\}}$ the {\it deletion} of $v$ and
  by $\link_K(v) = \{ \sigma \in K ~|~v \not\in \sigma, \sigma \cup \{v\} \in K\}$ its {\it link}.
  We consider the link $\link_K(v)$ and the deletion $K \setminus v$
  of $K$ at a vertex $v$ as simplicial complexes over the ground set
  $[m]\setminus\{v\}$. For two simplicial complexes $K_1$ and $K_2$ over
  disjoint ground sets we write $K_1 \join K_2 := \{ A_1 \cup A_2~| A_i \in K_i, i=1,2\}$
  for their {\it join}.
  Using this terminology the (closed) {\it star} $\star_K(v) := \{ \sigma \in K ~|~\sigma \cup \{v\} \in K\}$
  of a vertex $v$ can be written as $\star_K(v)= \{ \emptyset, \{v\} \} \join \link_K(v)$.

  The notion of a vertex decomposable simplicial complex was defined
  for pure complexes in \cite{PB} and extended to nonpure
  complexes in \cite{BW}.
  The class of vertex decomposable simplicial complexes if defined inductively. A simplicial complex
  $K$ is called {\it vertex decomposable} if:

  \begin{itemize}
    \item[(i)]  $K$ is a simplex or $K=\{\emptyset\}$, or
    \item[(ii)] there exists a vertex $v$ such that:
    \begin{itemize}
      \item[(a)] $K\setminus v$ and $\link_K(v)$ are
        vertex-decomposable
      \item[(b)] no facet of $\link_K(v)$ is a facet of
        $K\setminus v$.
    \end{itemize}
  \end{itemize}

  \noindent The distinguished vertex $v$ in (ii) is called a {\it
  shedding vertex}. We say that an ordering $(v_1, v_2,\ldots,v_\ell)$
  of some of the vertices of a simplicial complex $K$ is a {\it shedding
  sequence} if $v_k$ is a shedding vertex of the consecutive
  deletion $K\setminus v_\ell\setminus\cdots\setminus v_{k+1}$ for all
  $k=1,2,\ldots,\ell-1$ and $K\setminus v_\ell\setminus\cdots\setminus v_{1}$
  is a simplex.

  \begin{Example}
    The simplicial complex $K \subseteq 2^{[4]}$ with facets $\{1,3\}$ and $\{2,4\}$ has
    the vertex decomposable Alexander dual $K^{\dual}$ with facets $\{1,2\}$,$\{1,4\}$,$\{2,3\}$,$\{3,4\}$.
    In this case $(1,2,3,4)$ is a shedding sequence for $K^{\dual}$, while the sequence $(1,3,2,4)$ is
    not (see Figure \ref{fig1}).
  \end{Example}

  \begin{figure}
    \label{fig1}
    \vskip6cm
    \hskip-10cm
    \begin{picture}(0,0)%
      \includegraphics{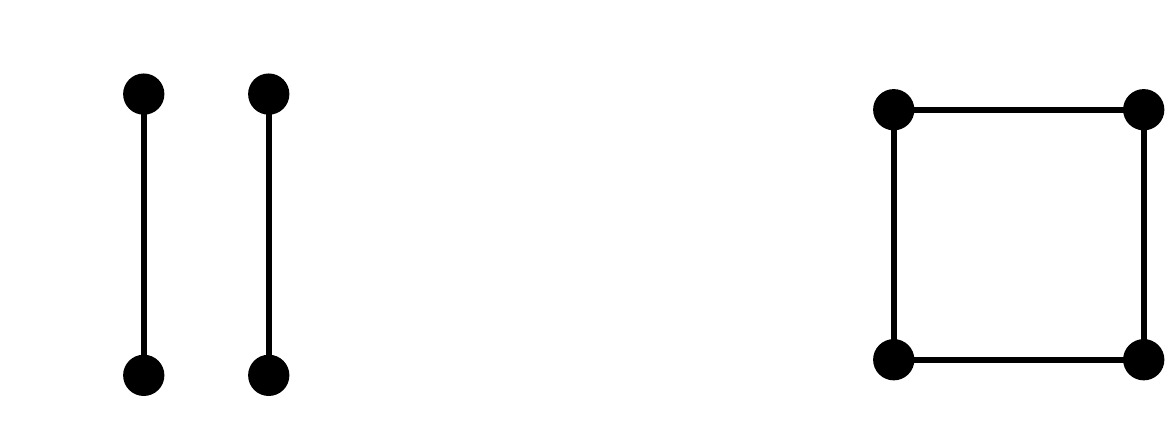}
    \end{picture}%

    \setlength{\unitlength}{3947sp}%
    \begingroup\makeatletter\ifx\SetFigFont\undefined%
      \gdef\SetFigFont#1#2#3#4#5{%
      \reset@font\fontsize{#1}{#2pt}%
      \fontfamily{#3}\fontseries{#4}\fontshape{#5}%
      \selectfont}%
    \fi\endgroup%
    \begin{picture}(5597,2115)(1111,-2551)
      \put(1000,  500){\makebox(0,0)[lb]{\smash{{\SetFigFont{12}{14.4}{\rmdefault}{\mddefault}{\updefault}{\color[rgb]{0,0,0}\huge{$K$}}}}}}
      \put(2000, 1300){\makebox(0,0)[lb]{\smash{{\SetFigFont{12}{14.4}{\rmdefault}{\mddefault}{\updefault}{\color[rgb]{0,0,0}$1$}}}}}
      \put(2970, 1300){\makebox(0,0)[lb]{\smash{{\SetFigFont{12}{14.4}{\rmdefault}{\mddefault}{\updefault}{\color[rgb]{0,0,0}$2$}}}}}
      \put(2000,    0){\makebox(0,0)[lb]{\smash{{\SetFigFont{12}{14.4}{\rmdefault}{\mddefault}{\updefault}{\color[rgb]{0,0,0}$3$}}}}}
      \put(2970,    0){\makebox(0,0)[lb]{\smash{{\SetFigFont{12}{14.4}{\rmdefault}{\mddefault}{\updefault}{\color[rgb]{0,0,0}$4$}}}}}

      \put(4500, 500){\makebox(0,0)[lb]{\smash{{\SetFigFont{12}{14.4}{\rmdefault}{\mddefault}{\updefault}{\color[rgb]{0,0,0}\huge{$K^\dual$}}}}}}
      \put(5600,1300){\makebox(0,0)[lb]{\smash{{\SetFigFont{12}{14.4}{\rmdefault}{\mddefault}{\updefault}{\color[rgb]{0,0,0}$1$}}}}}
      \put(7170,1300){\makebox(0,0)[lb]{\smash{{\SetFigFont{12}{14.4}{\rmdefault}{\mddefault}{\updefault}{\color[rgb]{0,0,0}$2$}}}}}
      \put(5600,   0){\makebox(0,0)[lb]{\smash{{\SetFigFont{12}{14.4}{\rmdefault}{\mddefault}{\updefault}{\color[rgb]{0,0,0}$4$}}}}}
      \put(7170,   0){\makebox(0,0)[lb]{\smash{{\SetFigFont{12}{14.4}{\rmdefault}{\mddefault}{\updefault}{\color[rgb]{0,0,0}$3$}}}}}
    \end{picture}%
    \vskip-4.5cm

    \caption{Simplicial Complex and its Dual}
  \end{figure}

  We are interested in the class of simplicial complexes with vertex
  decomposable Alexander duals. First we list a some well known facts
  that we will use in subsequent arguments.

  \begin{Remark}
    Let $K$ be a simplicial complex on ground set $[n]$.
    \label{rem:general}
     \begin{itemize}
       \item[(i)] Then $N \subseteq [n]$ is a minimal non-face of $K$ if
           and only if $[n] \setminus F$ is a facet if $K^\dual$.
       \item[(ii)] For $v \in [n]$ we have
           $\link_K(v)^{\dual}=K^{\dual}\setminus v$ and
           $(K\setminus v)^{\dual} = \link_{K^{\dual}}(v)$.
     \end{itemize}
  \end{Remark}

  The definition of vertex decomposability and \ref{rem:general} immediately imply the following
  lemma.

  \begin{Lemma}\label{l3}
    Let $K$ be a simplicial complex with vertex decomposable Alexander dual
    $K^{\dual}$. If $v$ is a shedding vertex of $K^{\dual}$ then the
    link $\link_K(v)$ and the deletion $K\setminus v$ have vertex
    decomposable duals.
  \end{Lemma}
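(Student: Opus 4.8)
The plan is to combine the recursive clause in the definition of vertex decomposability with the two duality identities recorded in \ref{rem:general}(ii); essentially no work is required beyond a substitution. Since $v$ is by hypothesis a shedding vertex of $K^\dual$, clause (ii)(a) of the definition of vertex decomposability, applied to $K^\dual$ at $v$, guarantees that both $K^\dual \setminus v$ and $\link_{K^\dual}(v)$ are vertex decomposable. Here, following the convention fixed in Section~\ref{sec:vertex}, the deletion and the link are regarded as simplicial complexes on the ground set $[m] \setminus \{v\}$.

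Next I would invoke \ref{rem:general}(ii), which identifies $\link_K(v)^\dual = K^\dual \setminus v$ and $(K \setminus v)^\dual = \link_{K^\dual}(v)$, all four complexes being taken over $[m] \setminus \{v\}$. Substituting, the Alexander dual of $\link_K(v)$ is $K^\dual \setminus v$ and the Alexander dual of $K \setminus v$ is $\link_{K^\dual}(v)$, and both of these were just shown to be vertex decomposable. This is exactly the conclusion of the lemma; in particular, neither an induction nor the nonpurity condition (ii)(b) in the definition of a shedding vertex is needed.

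The only thing that deserves a moment's attention is bookkeeping with ground sets: forming the Alexander dual of a complex on $[m] \setminus \{v\}$ must be carried out inside $2^{[m]\setminus\{v\}}$, and one should confirm that this is the convention under which \ref{rem:general}(ii) is stated and under which $\link_K(v)$ and $K \setminus v$ are defined in Section~\ref{sec:vertex}. I do not anticipate any genuine obstacle here, which is why the lemma can be asserted to follow immediately.
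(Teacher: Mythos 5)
Your argument is correct and is exactly the one the paper intends by its remark that the lemma follows "immediately" from the definition of vertex decomposability and \ref{rem:general}: the shedding-vertex hypothesis gives vertex decomposability of $K^\dual\setminus v$ and $\link_{K^\dual}(v)$, and the duality identities in \ref{rem:general}(ii) identify these with $\link_K(v)^\dual$ and $(K\setminus v)^\dual$ respectively. Your observations about ground-set conventions and about clause (ii)(b) being unnecessary are also accurate.
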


  \subsection{Polyhedral Products} Let us first collect some basic facts about polyhedral products that we will use
  in subsequent sections without explicit reference.

  \begin{itemize}
    \item[$\diamond$] If $L\subset K$ is a subcomplex over the same ground set
      then $\zZ_L(X,A)$ a subspace of $\zZ_K(X,A)$.
    \item[$\diamond$] A map $f:(X,A)\longrightarrow (Y,B)$ gives rise to a
      map $\zZ_K(f):\zZ_K(X,A)\longrightarrow\zZ_K(Y,B)$.
      For the composition of induced maps we have
      $\zZ_K(g\circ f)=\zZ_K(g)\circ\zZ_K(f)$.
    \item[$\diamond$] $\zZ_(\bullet,\bullet)$ is the homotopy functor; that is the homotopy type of
      $\zZ_K(X,A)$ depends only on the relative homotopy type of
      the pair $(X,A)$.
  \end{itemize}

  Next we stress the relevance of the spaces $\zZ_K(D^{n}, S^{n-1})$ by
  exhibiting a homotopy equivalence to $\zZ_K(\RR^{n}, \RR^{n}\setminus\{0\})$,
  which in turn is easily seen to be the complement of a suitably chosen
  coordinate subspace arrangement in $(\RR^{n})^{m}$.
  Clearly, $\zZ_K(\CC, \CC\setminus\{0\}) = \zZ_K(\RR^2, \RR^2\setminus\{0\})$ is the complement of the
  vanishing locus of the Stanley-Reisner ideal of $K^\dual$ in $\CC^m$.

  Before we can show that $\zZ_K(D^{n}, S^{n-1})$ and $\zZ_K(\RR^{n}, \RR^{n}\setminus\{0\})$
  are homotopy equivalent we need one elementary set theoretic fact.

  \begin{Lemma}\label{l1}
    \[\zZ_{K^{\dual}}(X,A)=X^{m}\setminus\zZ_K(X,X\setminus A).\]
  \end{Lemma}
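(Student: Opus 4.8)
The plan is to prove the set-theoretic identity $\zZ_{K^{\dual}}(X,A)=X^{m}\setminus\zZ_K(X,X\setminus A)$ by a direct argument on points, unwinding both sides of the equation according to the definitions of the polyhedral product and the Alexander dual. Write a point of $X^m$ as $x=(x_1,\ldots,x_m)$ and for such a point set $\sigma(x) := \{ i \in [m] \mid x_i \notin A \}$; thus $x \in (X,A)^{\tau}$ precisely when $\sigma(x) \subseteq \tau$. Since $\zZ_{K^{\dual}}(X,A) = \bigcup_{\tau \in K^{\dual}} (X,A)^{\tau}$, we have $x \in \zZ_{K^{\dual}}(X,A)$ if and only if there is some $\tau \in K^{\dual}$ with $\sigma(x) \subseteq \tau$, which — since $K^{\dual}$ is a simplicial complex and hence closed under taking subsets — happens if and only if $\sigma(x) \in K^{\dual}$ itself.

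Next I would analyze the right-hand side in the same way. For the pair $(X, X\setminus A)$ the relevant support set of a point $x$ is $\{ i \in [m] \mid x_i \notin X \setminus A \} = \{ i \in [m] \mid x_i \in A \} = [m] \setminus \sigma(x)$. Hence $x \in \zZ_K(X, X\setminus A)$ if and only if $[m]\setminus\sigma(x)$ is contained in some face of $K$, equivalently (again using downward closure of $K$) if and only if $[m]\setminus \sigma(x) \in K$. Negating, $x \in X^m \setminus \zZ_K(X, X\setminus A)$ if and only if $[m]\setminus\sigma(x) \notin K$. By the definition of the Alexander dual, $K^{\dual} = \{ \rho \subseteq [m] \mid [m]\setminus\rho \notin K \}$, so the condition $[m]\setminus\sigma(x) \notin K$ is exactly the condition $\sigma(x) \in K^{\dual}$. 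Comparing with the description of the left-hand side obtained in the first step, both sides consist of exactly those $x$ with $\sigma(x) \in K^{\dual}$, so the two sets coincide.

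I do not expect a genuine obstacle here; the statement is essentially a bookkeeping lemma and the only subtlety is being careful about the two roles the set $A$ plays (in $(X,A)$ versus in $(X, X\setminus A)$) and about which containments require $K$ or $K^{\dual}$ to be closed under subsets. One small point worth stating explicitly is that the empty face $\emptyset$ is allowed in a simplicial complex, so that the degenerate case $\sigma(x) = \emptyset$ (all coordinates in $A$) is handled correctly: $\emptyset \in K^{\dual}$ iff $[m] \notin K$, matching the fact that such an $x$ lies in $\zZ_K(X, X\setminus A)$ iff $[m] \in K$. With that caveat noted, the equivalence chain above is complete and the lemma follows.
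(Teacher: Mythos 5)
Your proof is correct and takes essentially the same route as the paper: both arguments unwind the definitions pointwise and reduce membership on each side to a single combinatorial condition on the support of $x$ (your $\sigma(x)\in K^{\dual}$ is the same as the paper's $I_x\notin K$, since $\sigma(x)=[m]\setminus I_x$). The paper states the two equivalences a bit more tersely in terms of the conditions $\sigma^{c}\subseteq I_x$ and $I_x\cap\sigma^{c}\neq\emptyset$, but the underlying reasoning — using downward closure of $K$ and $K^{\dual}$ to collapse ``contained in some face'' to ``is a face'' — is identical.
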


  \begin{proof}
    For a $x=(x_1,\ldots,x_m)\in X^{m}$ denote by $I_x=\{i\in[m]\mid
    x_i\in A\}$. We have that $x\in\zZ_{K^{\dual}}(X,A)$ if and only if
    $\sigma^{c}\subset I_x$ for some $\sigma\in K^{\dual}$ and
    $x\notin\zZ_K(X,X\setminus A)$ if and only if
    $I_x\cap\sigma^{c}\neq\emptyset$ for all $\sigma\in K$. Both
    statements are equivalent to $I_x\notin K$.
  \end{proof}

  The coordinate subspace arrangement $\aA_{K,n}$
  associated to the complex $K$ is defined as the $K^\dual$-power of
  the pair $(\RR^n,\{0\})$
  \[\aA_{K,n}=\zZ_{K^{\dual}}(\RR^{n},\{0\})=\bigcup_{\sigma\in
  K^{\dual}}(\RR^{n},\{0\})^{\sigma}.\] By \ref{l1}, the
  complement
  $\aA_{K,n}^c=(\RR^n)^m\setminus\aA_{K,n}$ of the
  arrangement $\aA_{K,n}$ is determined as the polyhedral
  product of the pair $(\RR^n,\RR^n\setminus\{0\})$
  \[\aA_{K,n}^c=\zZ_K(\RR^{n},\RR^{n}\setminus\{0\}).\]
  Since $(\RR^{n},\RR^{n}\setminus\{0\})$ and
  $(D^{n},D^{n}\setminus\{0\})$ are relative homotopy equivalent and
  $\zZ_K(X,A)$ is a homotopy functor, it
  follows that $\aA_{K,n}^c$ and $\zZ_K(D^{n},
  D^{n}\setminus\{0\})$ have the same homotopy type. This type of reasoning
  does not suffice to prove the homotopy equivalence of $\zZ(D^{n},D^{n}\setminus\{0\}))$ and $\zZ(D^n, S^{n-1})$
  since $(D^{n},D^{n}\setminus\{0\}))$ and $(D^n, S^{n-1})$ are not
  homotopy equivalent as pairs. Indeed Jelene Grbi\'s pointed out to us that $\zZ_K(\bullet,\bullet)$ is a homotopy
  functor in both arguments from which the results follows easily. For the sake of an explicit homotopy we still provide the
  proof. For $n=2$ an explicit homotopy equivalence of $\zZ(D^{2},D^{2}\setminus\{0\}))$ and $\zZ(D^2, S^1)$
  was established in \cite{BP}. We provide the homotopy equivalence for any $n \geq 1$.

  \begin{Proposition}\label{t2}
    For $n \geq 1$ the complement $\aA_{K,n}^c$ of the arrangement
    $\aA_{K,n}$ is homotopy equivalent to the $n$-sphere
    moment-angle complex $\zZ_K(D^{n},S^{n-1})$.
  \end{Proposition}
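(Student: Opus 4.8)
The plan is to exhibit an explicit deformation retraction, coordinate by coordinate, from the pair $(D^n, D^n \setminus \{0\})$ onto a retract that is relative homotopy equivalent in a way that survives the polyhedral product construction. The subtle point, as the text already flags, is that $(D^n, D^n\setminus\{0\})$ and $(D^n, S^{n-1})$ are not homotopy equivalent as pairs, so I cannot simply invoke the homotopy-functoriality of $\zZ_K(\cdot,\cdot)$ in the pair; instead I want an ambient deformation of $D^n$ itself that carries $D^n\setminus\{0\}$ into $S^{n-1}$ in a controlled, product-compatible fashion.

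First I would recall that $\aA_{K,n}^c = \zZ_K(\RR^n, \RR^n\setminus\{0\})$, which by \ref{l1} and the homotopy-functoriality argument already given in the excerpt has the same homotopy type as $\zZ_K(D^n, D^n\setminus\{0\})$. So it suffices to prove $\zZ_K(D^n, D^n\setminus\{0\}) \simeq \zZ_K(D^n, S^{n-1})$. For this I would write $D^n$ as the union of the closed ball of radius $\tfrac12$ and the annular shell $\tfrac12 \le |x| \le 1$, and construct a map $r : D^n \to D^n$ which is the identity near the boundary, collapses the radius-$\tfrac12$ ball onto a single boundary point is too crude — instead the cleaner move is to push radially: define $h_t : D^n \to D^n$ by $h_t(x) = \frac{(1-t)|x| + t}{|x|}\,x$ for $x\ne 0$ and $h_t(0)=0$, which at $t=1$ sends every nonzero vector to the unit sphere while fixing $0$. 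This is a homotopy from $\id_{D^n}$ to a map $h_1$ with $h_1(D^n\setminus\{0\}) = S^{n-1}$ and $h_1(D^n) = S^{n-1}\cup\{0\}$. Then $\zZ_K(h_1)$ is a self-map of $X^m$ restricting to a map $\zZ_K(D^n, D^n\setminus\{0\}) \to \zZ_K(S^{n-1}\cup\{0\}, S^{n-1})$, and the homotopy $\zZ_K(h_t)$ stays inside $\zZ_K(D^n, D^n\setminus\{0\})$ throughout, since $h_t$ preserves the property of being nonzero (indeed $|h_t(x)| \ge t > 0$ for $t>0$ and all $x$, and $|h_0(x)|=|x|$). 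So $\zZ_K(D^n, D^n\setminus\{0\})$ deformation retracts onto $\zZ_K(S^{n-1}\cup\{0\}, S^{n-1})$.

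Next I would identify $\zZ_K(S^{n-1}\cup\{0\}, S^{n-1})$ with $\zZ_K(D^n, S^{n-1})$ up to homotopy. Here the pair $(S^{n-1}\cup\{0\}, S^{n-1})$ does deformation retract onto $(D^n, S^{n-1})$? No — it is the other way: $D^n$ deformation retracts onto $S^{n-1}\cup\{0\}$? That is false as well. The correct statement is that $(S^{n-1}\cup\{0\}, S^{n-1})$ and $(D^n, S^{n-1})$ are relative homotopy equivalent via the inclusion $S^{n-1}\cup\{0\} \hookrightarrow D^n$, which is a homotopy equivalence of pairs: a homotopy inverse is any retraction $D^n \to S^{n-1}\cup\{0\}$ obtained by first applying $h_1$ above and noting $h_1$ is homotopic rel $S^{n-1}$ to such a retraction, the point $0$ being carried along trivially. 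Once the inclusion $(S^{n-1}\cup\{0\}, S^{n-1}) \hookrightarrow (D^n, S^{n-1})$ is a relative homotopy equivalence, homotopy-functoriality of $\zZ_K(\cdot,\cdot)$ in the pair (already listed among the basic facts in the excerpt) gives $\zZ_K(S^{n-1}\cup\{0\}, S^{n-1}) \simeq \zZ_K(D^n, S^{n-1})$, and chaining the two equivalences finishes the proof.

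The main obstacle is getting the bookkeeping around the point $0$ right: one must check that the radial homotopy $h_t$ indeed never sends a nonzero coordinate to $0$ for $t > 0$ (so that $\zZ_K(h_t)$ preserves the subspace $\zZ_K(D^n, D^n\setminus\{0\})$ for all $t$), and that the limiting map $h_1$, while not a homeomorphism, still yields the correct target polyhedral product — here one uses that $\zZ_K$ of a pair depends only on the relative homotopy type and that $(h_1(D^n), h_1(D^n\setminus\{0\})) = (S^{n-1}\cup\{0\}, S^{n-1})$ sits inside $(D^n, S^{n-1})$ as a relative deformation retract. For $n=1$ the special structure of $D^1 = [-1,1]$ with $S^0$ two points should be checked by hand, but the same radial formula works verbatim. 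I would also remark, following Grbić's observation recorded in the excerpt, that once one knows $\zZ_K(\cdot,\cdot)$ is a homotopy functor in \emph{both} arguments the statement is immediate, so the explicit homotopy above is included only for concreteness.
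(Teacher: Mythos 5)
The central step in your proposal does not hold: the formula $h_t(x)=\frac{(1-t)|x|+t}{|x|}\,x$ for $x\neq 0$ together with $h_t(0)=0$ is \emph{not} a continuous map on $D^n$ for any $t>0$. Indeed $|h_t(x)|=(1-t)|x|+t\to t>0$ as $x\to 0$, while $h_t(0)=0$, so $h_t$ has a jump at the origin. Since $\zZ_K(D^n,D^n\setminus\{0\})$ contains points whose $i$-th coordinate equals $0$ whenever $i$ lies in some face of $K$, applying $h_t$ coordinate-by-coordinate does not yield a continuous homotopy on the polyhedral product, and the chain of equivalences breaks at the first link. Nor can the formula be patched: there is \emph{no} continuous $g:D^n\to D^n$ with $g(0)=0$ and $g(D^n\setminus\{0\})\subseteq S^{n-1}$, because continuity at $0$ forces $|g(x)|<1$ for $x$ near $0$, contradicting $|g(x)|=1$ for those nonzero $x$. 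So any coordinate-wise radial push that tries to hold the origin fixed is doomed.

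This is precisely the difficulty the paper's proof is engineered to avoid. Rather than projecting each factor $D^n$ separately, the paper builds the retraction $r_K$ by a recursion over the facets of $K$ and, at each stage, applies a \emph{joint} radial projection on a block of coordinates: e.g.\ on $D^{nk}\setminus\{(0,\ldots,0)\}\to S^{nk-1}$ for a group of $k$ coordinates that the combinatorics of $K$ guarantees cannot all vanish simultaneously. The block projection is continuous exactly because the deleted point is the single origin of $(D^n)^k$, not the hyperplane-like union of loci where some individual coordinate vanishes. Your write-up correctly flags that $(D^n,D^n\setminus\{0\})$ and $(D^n,S^{n-1})$ are not relatively homotopy equivalent and that single-argument functoriality is insufficient, but the proposed substitute---a single ambient radial homotopy of $D^n$---cannot exist; you need either the facet recursion with block projections as in the paper, or the two-variable homotopy-functoriality of $\zZ_K(\bullet,\bullet)$ that Grbi\'c's remark supplies.
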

  \begin{proof}
    A simple retraction argument shows that
    the complex $\aA_{K,n}^c = \zZ_K(\RR^{n},\RR^{n}\setminus\{0\})$ is
    homotopy equivalent to $\zZ_K(D^{n},D^{n}\setminus\{0\})$.
    We inductively construct another deformation retraction
    $$ r_K : \zZ_K(D^n;D^n \setminus \{0\}) \rightarrow \zZ_K(D^n; S^{n-1}).$$

    Our induction basis is the case when $K=2^{[k]} \subseteq 2^{[m]}$ is a full $k$-simplex. In this case we have $\zZ_K(D^n;D^n \setminus \{0\}) =
    D^{nk} \times (D^n \setminus \{0\})^{m-k}$ and $\zZ_K(D^n; S^{n-1}) = D^{nk} \times (S^{n-1})^{m-k}$.
    Then the identity on the coordinates $[k]$ and the radial projection $\pi_n : D^n \setminus \{0\} \mapsto S^{n-1}$ in the coordinates
    $\{ k+1, \ldots ,m\}$ provides a deformation retraction from $r_K : \zZ_K(D^n;D^n \setminus \{0\}) \rightarrow \zZ_K(D^n; S^{n-1})$.

    In the induction step $K = K' \cup \{\tau\}$ for some facet $\tau$ of $K$. We may assume $\tau = [k]$ after suitable relabeling.
    In this situation
    \begin{eqnarray*}
       \zZ_{K'}(D^n;D^n \setminus \{0\})  & = & \zZ_{K}(D^n;D^n \setminus \{0\}) \setminus \{ (0,\cdots, 0) \times (D^{n} \setminus \{0\})^{m-k} \\
    \end{eqnarray*}
    and
    \begin{eqnarray*}
       \zZ_{K'}(D^n;S^{n-1}) & = & \zZ_{K}(D^n;S^{n-1}) \setminus (D^n \setminus S^{n-1})^k \times (S^{n-1})^{m-k} \\
    \end{eqnarray*}

    \noindent {\sf Case 1:} $K = 2^{[k]}$ is a full $(k-1)$-simplex and $\tau = [k]$.

    Then \begin{eqnarray*}
       \zZ_{K'}(D^n;D^n \setminus \{0\})  & = &  (D^{nk} \setminus \{ (0,\ldots,0)\}) \times (D^{n} \setminus \{0\})^{m-k}
    \end{eqnarray*}
    and
    \begin{eqnarray*}
       \zZ_{K'}(D^n;S^{n-1}) & = & S^{nk-1} \times (S^{n-1})^{m-k}.
    \end{eqnarray*}
    Hence the projection $D^{nk} \setminus \{ (0,\ldots,0)\}) \mapsto S^{nk-1}$ on the first $k$ coordinates and the
    projection $\pi_n$ in each of the remaining coordinates yields a deformation retraction $r_{K'}$.

    \noindent {\sf Case 2:} $\tau = [k]$ is not the unique facet of $K$.

    In this case we can write $K$ as the union of $K'$ and $2^{[k]}$ with intersection $L = 2^{[k]} \setminus \{[k]\}$.
    By induction for $K$ and by Case 1 for $L$ we have deformation retraction
    maps $r_{K} : \zZ_{K}(D^n;D^n \setminus \{0\}) \rightarrow \zZ_{K}(D^n; S^{n-1})$ and
    $r_{L} : \zZ_{L}(D^n;D^n \setminus \{0\}) \rightarrow \zZ_{L}(D^n; S^{n-1})$.
    Let $i : \zZ_{K'}(D^n;D^n \setminus \{0\}) \hookrightarrow \zZ_{K}(D^n;D^n \setminus \{0\})$ be the inclusion map.
    Then consider $r:=r_{L} \circ r_{K} \circ i$. Then the image of $r$ is $\zZ_{K'}(D^n; S^{n-1})$ and a simple computation shows that
    is $r_{K'} = r$ is the desired retraction.
  \end{proof}

  Complements of subspace arrangements are an interesting object for their own sake and their homotopy
  types are usually difficult to approach.

  We finally recall a result from \cite{BBCG} on the homotopy
  type of the suspension $\Sigma \zZ_K(D^n,S^{n-1})$ of $\zZ_K(D^n,S^{n-1})$, that by \ref{t2} also
  gives a homotopy decomposition of the suspension of the subspace arrangement complement $\aA_{K,n}^c$.

  \begin{Proposition}[Cor. 2.24 in \cite{BBCG}]
     \label{pr:susp}
     For $n \geq 2$ there is a homotopy equivalence
     $$\Sigma \zZ_K(D^n,S^{n-1}) \simeq  \bigvee_{{{M \not\in K} \atop {M \subseteq [m]}}} \Sigma K_M \join S^{(n-1)\# M}.$$
  \end{Proposition}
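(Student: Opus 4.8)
The plan is to combine the general stable splitting of polyhedral products from \cite{BBCG} with the specific homotopy type of the pair $(D^n,S^{n-1})$. The starting point is the Bahri--Bendersky--Cohen--Gitler wedge decomposition of the suspension of an arbitrary polyhedral product $\zZ_K(X,A)$, which for a pair $(X,A)$ with $A$ a subcomplex asserts that $\Sigma \zZ_K(X,A)$ splits, after a single suspension, as a wedge indexed over subsets $M \subseteq [m]$ of smash products built from $\widehat{Z}_{K_M}$-type pieces; for a CW-pair of the form $(CX', X')$ the $M$-th summand reduces to $\Sigma\, |K_M| \join X'^{\wedge M}$, where $X'^{\wedge M}$ denotes the $\#M$-fold smash power. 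First I would invoke \ref{t2}, or rather the fact recorded just before it that $\zZ_K(\bullet,\bullet)$ is a homotopy functor in both arguments, to replace $(D^n,S^{n-1})$ by the pair $(CS^{n-1},S^{n-1})$ (the cone on $S^{n-1}$ together with $S^{n-1}$), which is homotopy equivalent to it as a pair of spaces since $D^n \cong C S^{n-1}$ canonically and the subspace is the same.

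With that reduction in hand, I would apply \cite[Cor.~2.24]{BBCG} directly to the cone pair $(CS^{n-1}, S^{n-1})$. The general statement for cone pairs gives
$$\Sigma \zZ_K(CX',X') \simeq \bigvee_{M \subseteq [m]} \Sigma\, |K_M| \join (X')^{\wedge \#M},$$
and the summands with $M \in K$ vanish because then $K_M$ is a simplex, hence contractible, and its (reduced) join with anything is contractible. Specializing $X' = S^{n-1}$ and using the standard identification $(S^{n-1})^{\wedge k} \simeq S^{k(n-1)}$ of iterated smash powers of spheres, the $M$-th surviving term becomes $\Sigma |K_M| \join S^{(n-1)\#M}$, which is exactly the formula claimed. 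The only bookkeeping points are: the empty set $M = \emptyset$ contributes $\Sigma |K_\emptyset| \join S^0$; but $K_\emptyset = \{\emptyset\}$ is the void-plus-empty-simplex whose geometric realization is a point (or empty, depending on convention), so either this term is absorbed into the basepoint or one checks it is $\emptyset \in K$ and hence excluded — and in all conventions consistent with the rest of the paper the indexing condition $M \notin K$ handles this uniformly. A brief remark should confirm that our convention for $K_M$ and for the reduced join/smash matches the one in \cite{BBCG}, so that no spurious suspension shifts appear.

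The main obstacle is essentially a matter of matching conventions rather than a substantive difficulty: \cite{BBCG} work with based spaces and a particular model for the smash and join, and one must verify that the unreduced versus reduced join and the treatment of the basepoint in $\widehat{X}$, $\widehat{A}$ align with the statement as written, in particular that $S^{(n-1)\#M}$ is the correct sphere dimension (so that $(n-1)\cdot 0 = 0$ for $M = \emptyset$ and the join with $S^0$ is just the unreduced suspension, consistent with $\Sigma K_M \join S^{0} \simeq \Sigma\Sigma |K_M|$ up to the conventions in force). Once these identifications are pinned down the proposition is immediate; I would state it as a direct corollary of \cite[Cor.~2.24]{BBCG} with at most a sentence or two of translation, and I do not expect to need any discrete Morse theory or any of the vertex-decomposability hypotheses here — this statement is completely general in $K$.
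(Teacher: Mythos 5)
The paper does not actually prove this Proposition: it is stated with the attribution ``Cor.\ 2.24 in \cite{BBCG}'' and is cited as a black box, with no proof environment following it. Your proposal reproduces exactly that approach --- invoke \cite[Cor.\ 2.24]{BBCG} for a cone pair and specialize --- so you are doing essentially the same thing the paper does, just with the translation written out. The bookkeeping you supply is correct: $(D^n,S^{n-1})$ is literally homeomorphic as a pair to $(CS^{n-1},S^{n-1})$ (so your detour through the homotopy functoriality of $\zZ_K(\cdot,\cdot)$ is unnecessary, though harmless); $(S^{n-1})^{\wedge k}\simeq S^{(n-1)k}$; for $M\in K$ the restriction $K_M$ is a full simplex, hence contractible, and so is $\Sigma K_M\join S^{(n-1)\#M}$, which justifies dropping those summands and restricting the wedge to $M\notin K$; and $\emptyset\in K$ automatically removes the $M=\emptyset$ term. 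One small remark you might add: the hypothesis $n\geq 2$ comes precisely from the connectivity requirement on $A=S^{n-1}$ in \cite{BBCG}, which fails for $S^0$; the paper handles $n=1$ separately later via \cite{GPW} and Hochster's formula. With that noted, your proposal is a faithful unpacking of the cited result and needs no discrete Morse theory or vertex-decomposability, as you correctly observe.
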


\section{Discrete Morse theory for $\zZ_K(D^{n}, S^{n-1})$}
  \label{sec:dimo}

  In this section we exhibit the tools and perform the basic constructions
  needed for the proof of \ref{t1}.

  Let $X$ be a compact regular CW-complex. It is well known
  \cite[Prop. 1.2.]{FP} that for regular CW-complexes the topology
  of $X$ is determined by its face poset; that is the partial order
  $\preceq$ on the set $X^{(\cells)}$ of closed cells of $X$ ordered
  by $c\preceq c'$ if and only if $c\subseteq c'$. The directed
  graph $G_X=(V_X, E_X)$ on vertex set $V_X=X^{(\cells)}$ and edge set
  \[E_X=\{c\rightarrow c'\mid c, c'\in X^{(\cells)}, c\succeq c',
  \dim(c)=\dim(c')+1\}\] is the graph of the Hasse diagram of $X^{(\cells)}$.
  An acyclic matching on $G_X$ is a set $ \mM \subseteq E_X$
  of edges of $G_X$ such that:

  \begin{itemize}
    \item[$\diamond$] Each cell from $X^{(\cells)}$ appears in at most once
       edge from $\mM $
    \item[$\diamond$] The directed graph $G_X^{ \mM}=(V_X, E_X^{\mM})$
       with edge set
       $E_X^{ \mM}=E_X\backslash \mM\cup\{c'\rightarrow c\mid c\rightarrow c'\in \mM\}$
       contains no directed cycles.
  \end{itemize}

  Note that $G_X^{\mM}$ arises from $G_X$ by reversing all
  edges from $\mM$. Discrete Morse theory was developed by
  Forman \cite{F} in order to explicitly determine the homotopy
  type of regular CW-complexes. In order to formulate his main
  result we need some more notation. For an acyclic matching $
  \mM $ in $G_X$ we call a cell $c\in X^{(\cells)}$ an $\mM$-critical
  cell if $c$ does not appear in any edge from $\mM$.
  We write $\Crit(\mM)$ for the critical cells of the matching $\mM$.
  For an $i$-cell
  $c$ denote by $f_c$ the attaching map
  $f_c:\partial c\longrightarrow X^{i-1}$, where $X^{i-1}$ is
  the $(i-1)$-skeleton of $X$. To any edge $e:c\rightarrow c'$ in
  $G_X^{ \mM}$ we associate a map $f_e$. If $c\rightarrow
  c'$ is an edge from $E_X$ then this map is the restriction
  $f_c\mid_{f_c^{-1}(c')}$ of the attaching map $f_c$ of $c$ to the
  preimage of $c'$. If $e:c\rightarrow c'\in E_X^{\mM}\backslash E_X$
  then $f_e$ is an arbitrary but fixed homeomorphism from $c$ to
  $\partial c'$ for which $f_{c'} \circ f_{e} (x) = x$ for all $x \in \partial c$. For two $ \mM-$critical
  cells $c$ and $c'$ we call a directed path
  $p:c=c_0\rightarrow\cdots\rightarrow c_n=c'$ in
  $G_X^{ \mM}$ of length $n\geq 1$ a gradient path. To each
  gradient path we associate the map $f_p:\partial c\longrightarrow c'$
  which is the composition of the maps $f_e$ corresponding to
  the edges of $p$. It is easy to see that the map $f_p$ for all
  gradient paths starting in the $i$-critical cell $c$ define a map
  $f_c^{ \mM}$ from $\partial c$ to the set of $\mM$-critical cells of
  dimension less than $i$. By induction one then shows that this defines
  a CW-structure on the set $\Crit(\mM)$ of $\mM$-critical cells. We denote this
  CW-complex by $X^{\mM}$. We are now in position to formulate the main
  result from Forman's discrete Morse theory using its reformulation
  in terms of acyclic matchings provided by Chari \cite[Proposition 3.3]{C}.

  \begin{Theorem}[Forman, Theorem 3.4 in \cite{F}] \label{t3}
    Let $X$ be a compact
    regular CW-complex and let $ \mM$ be an acyclic matching
    on $G_X$. Then $X$ is homotopy equivalent to the CW-complex
    $X^{\mM}$, where $\mM$-critical cell $c \in \Crit(\mM)$ of dimension $i$ is
    attached in $X^{\mM}$ to the $(i-1)$-skeleton of $X^{\mM}$
    via the map $f_c^{\mM}$.
  \end{Theorem}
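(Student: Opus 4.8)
The plan is to follow Forman's original argument \cite{F}: convert the matching $\mM$ into a discrete Morse function, filter $X$ by its sublevel complexes, and induct on the critical cells while carrying the attaching maps along.

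First I would use the dictionary between acyclic matchings and discrete Morse functions. As $G_X^{\mM}$ has no directed cycle it admits a linear extension; ranking the cells along such an extension, while forcing the two ends of each edge of $\mM$ to a common value, produces a function $f\colon X^{(\cells)}\to\RR$ whose matched pairs are exactly the edges of $\mM$, whose critical cells are exactly $\Crit(\mM)$, and which may be taken injective on $\Crit(\mM)$. For $t\in\RR$ set $X_t:=\bigcup_{f(c)\le t}\overline c$; this is a subcomplex of $X$, and the whole boundary of a critical cell already lies in $X_t$ for $t$ just below that cell's value. Enumerate $\Crit(\mM)=\{c_1,\dots,c_N\}$ with $f(c_1)<\cdots<f(c_N)$, pick $s_0<s_1<\cdots<s_N$ with $s_{j-1}<f(c_j)<s_j$, and let $Y_j\subseteq X^{\mM}$ be the subcomplex on $c_1,\dots,c_j$, so that $Y_0=\emptyset$ and $Y_N=X^{\mM}$. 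The statement to prove by induction on $j$ is that $X_{s_j}$ is homotopy equivalent to $Y_j$, compatibly with the attaching data; the case $j=N$, where $X_{s_N}=X$, is the theorem.

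The inductive step rests on two local lemmas. \emph{Collapsing lemma}: if $(a,b]$ contains no $f$-value of a critical cell, then $X_b\setminus X_a$ is a union of matched pairs, and the acyclicity of $G_X^{\mM}$ is exactly what lets these pairs be peeled off one at a time as elementary collapses, so that $X_b$ collapses onto $X_a$ — the resulting deformation retraction moves only points of the removed cells, hence fixes any critical cells attached at earlier stages. \emph{Handle lemma}: with $s_j'$ just below $f(c_j)$ and $t$ just above it, $X_t$ is obtained from $X_{s_j'}$ by attaching a single cell of dimension $i=\dim c_j$ along the restriction to $\partial c_j$ of the original attaching map $f_{c_j}$, which is legitimate since all proper faces of the critical cell $c_j$ already lie in $X_{s_j'}$. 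Composing this attaching map with the deformation retraction of the collapsing lemma from $X_{s_j'}$ down to $X_{s_{j-1}}\simeq Y_{j-1}$ yields a map $\partial c_j\to Y_{j-1}$, and the induction closes once that map is identified with $f_{c_j}^{\mM}$.

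That last identification is the step I expect to be the main obstacle. One must show that pushing the attaching map of $c_j$ through the chain of elementary collapses of the collapsing lemma produces, up to homotopy, exactly the map $f_{c_j}^{\mM}$ built from the gradient paths of $G_X^{\mM}$ that issue from $c_j$. The underlying mechanism is local: a single collapse across a matched pair $(c',c)$ replaces the part of a bounding sphere mapping into $c'$ by a part sweeping across $c$ onto the remaining facets of $c$, which is precisely the effect of traversing the reversed matched edge $c'\to c$ and then the ordinary edges out of $c$ in $G_X^{\mM}$. Turning this into the gradient-path formula, however, requires care: keeping track of the homotopy class of the attaching map after each collapse, accounting for the multiplicities with which distinct gradient paths contribute, and using finiteness together with acyclicity of $G_X^{\mM}$ to guarantee that every $V$-path ends at a critical cell. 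The remaining points — that the sublevel sets $X_t$ are subcomplexes, that $X_b\setminus X_a$ really is a union of matched pairs, and that $X^{\mM}$ is a genuine CW-complex — are the routine verifications already indicated in the text preceding the statement, so they need no elaboration here.
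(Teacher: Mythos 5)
The paper does not prove this statement; it is cited verbatim as Forman's Theorem~3.4, reformulated via Chari's dictionary between discrete Morse functions and acyclic matchings. So there is no ``paper's own proof'' to compare against. Your sketch is a faithful reconstruction of Forman's original argument: translate the acyclic matching into a discrete Morse function via a linear extension of $G_X^{\mM}$ with matched pairs at a common value, filter $X$ by sublevel complexes, show that sublevel sets are subcomplexes, prove the collapsing lemma (no critical values in $(a,b]$ implies $X_b \searrow X_a$, with acyclicity supplying the order in which matched pairs can be removed as free-face collapses), prove the handle lemma (a single critical value yields attachment of one cell of the right dimension), and then track the attaching map through the collapses to recover $f_c^{\mM}$. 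You also correctly single out the genuinely delicate step — identifying the pushed-forward attaching map with the gradient-path map — which is the part that requires the careful homotopy bookkeeping in Forman's paper. Since the paper simply defers to \cite{F} and \cite{C}, your proposal is an appropriate and accurate expansion of what is being cited, not an alternative route.
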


  Let $(X,A)$ be a regular CW-pair with the set of cells $V_A\subseteq
  V_X$, where $V_X=\{c_\lambda\}_{\lambda\in I}$ and
  $V_A=\{c_\lambda\}_{\lambda\in J}$ for some $J\subseteq I$. Then
  $\zZ_K(X,A)$ is the CW-subcomplex of the product $X^{m}$
  with the set of cells

  \[V_{\zZ_K(X,A)}=\bigcup_{\sigma\in K}
    \{c_{\lambda_1}\times\cdots\times c_{\lambda_m}\mid\lambda_i\in
    J, i\notin\sigma\}.
  \]

  Since any product of regular CW-complexes is regular \cite[Theorem 2.2.2 (iv)]{FP}
  it follows that $X^m$ carries a regular CW-structure and so does any
  subcomplex. In particular, the cellular structure on $\zZ_K(X,A)$ is regular.
  The directed graph $G_{\zZ_K(X,A)}$
  is the subgraph of the product graph $G_{X^{m}}=(G_X)^{m}$.

  In the following for a simplicial complex $K$ and assuming a fixed pair $(X,A)$ we write
  $V_K$ for $V_{\zZ_K(X,A)}$, $E_K$ for $E_{\zZ_K(X,A)}$ and $G_K$ for $G_{\zZ_K(X,A))}$.

  The next
  lemma follows immediately from definitions.

  \begin{Lemma}\label{l2}
    If $K_M$ is the restriction of the simplicial complex $K$ on a
    subset $M\subseteq [m]$, then
    \[\zZ_{K_M}(X,A)=(X,A)^{M}\cap\zZ_K(X,A).\]
  \end{Lemma}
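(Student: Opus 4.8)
The plan is to reduce the claim to the elementary bookkeeping already used in the proof of \ref{l1}: membership of a point $x = (x_1,\ldots,x_m) \in X^m$ in any polyhedral product over $X^m$ depends only on the set $I_x := \{ i \in [m] \mid x_i \in A \}$. First I would record the following criterion. For a simplex $\sigma$ one has $x \in (X,A)^\sigma$ exactly when $x_i \in A$ for every $i \notin \sigma$, i.e. when $[m] \setminus \sigma \subseteq I_x$, equivalently $[m] \setminus I_x \subseteq \sigma$. Taking the union over the faces of a simplicial complex $L$ on $[m]$ and using that $L$ is closed under passing to subsets, this gives
\[
x \in \zZ_L(X,A) \quad\Longleftrightarrow\quad [m] \setminus I_x \in L .
\]

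Next I would apply this three times. For $L = K$ it gives $x \in \zZ_K(X,A) \Longleftrightarrow [m]\setminus I_x \in K$. The set $(X,A)^M$ is described by the same formula as $(X,A)^\sigma$ with $\sigma = M$, so $x \in (X,A)^M \Longleftrightarrow [m]\setminus M \subseteq I_x \Longleftrightarrow [m]\setminus I_x \subseteq M$. Finally, for $L = K_M$ (regarded as a simplicial complex on the full ground set $[m]$, hence a subcomplex of $K$) the criterion reads $x \in \zZ_{K_M}(X,A) \Longleftrightarrow [m]\setminus I_x \in K_M$. Since by definition $K_M = \{ \sigma \in K \mid \sigma \subseteq M \}$, the condition $[m]\setminus I_x \in K_M$ is precisely the conjunction of $[m]\setminus I_x \in K$ and $[m]\setminus I_x \subseteq M$, which by the two previous equivalences is exactly the statement that $x \in \zZ_K(X,A)$ and $x \in (X,A)^M$. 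This proves the set-theoretic equality of the two subsets of $X^m$; as both carry the subspace topology from $X^m$, it is an equality of spaces.

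There is no real obstacle here; the only point deserving a word is the convention that on the left-hand side $K_M$ must be viewed as a complex over $[m]$ (not over $M$), so that $\zZ_{K_M}(X,A)$ genuinely sits inside $X^m$ and the formula makes sense. One could also package the argument differently by noting that $(X,A)^M = \zZ_{2^M}(X,A)$ and $K_M = K \cap 2^M$, and invoking the fact that the polyhedral product construction $L \mapsto \zZ_L(X,A)$ sends intersections of complexes on a common ground set to intersections of subspaces; but that fact is itself established by the very same $I_x$-argument, so the direct verification above is the most economical route, in keeping with the observation that the lemma follows immediately from the definitions.
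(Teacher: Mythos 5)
Your proof is correct and matches the paper's intent: the paper states only that this lemma ``follows immediately from definitions'' and gives no argument, and your explicit unwinding via the set $I_x = \{i \mid x_i \in A\}$ is precisely the bookkeeping the paper has in mind (the same device already used in the proof of \ref{l1}). The one point you rightly flag --- that $K_M$ must be read as a complex on the full ground set $[m]$ so that $\zZ_{K_M}(X,A)$ lives in $X^m$ --- is indeed the only subtlety.
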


  In the following, we assume that $(X,A)=(D^{n}, S^{n-1})$ for some
  $n\geq 1$, with the minimal regular cellular structure. That is,
  the sphere $S^{n-1}$ has two cells $e_-^i,e_+^i$ in each dimension $i$
  for $0 \leq i \leq n-1$ and the disc $D^{n}$ has one additional cell $e_\ncell^n$
  of dimension $n$. Thus
  we have $V_{S^{n-1}}=\{e_-^{i}, e_+^{i}\mid i=0,1,\ldots, n-1\}$ and
  $V_{D^{n}}=V_{S^{n-1}}\cup\{e_\ncell^n\}$. Let $G$ be the graph of the
  corresponding Hasse diagram of the disc $D^{n}$ and $\mM$
  be the acyclic matching on $G$ defined by

  \[
    \mM=\{e_-^{i+1}\rightarrow e_+^{i}\mid i=0,1,\ldots
    n-2\}\cup\{ e_\ncell^n \rightarrow e_+^{n-1}\}.
  \]

  The graph $G^\mM$ is visualized in Figure \ref{fimatch}.

  \begin{figure}
    \begin{diagram}
      e^0_{-}       & \lTo      & e^1_{-} & \lTo       & \cdots & \lTo      & e^{n-1}_{-} &              & \\
                    &\luTo\ruTo &         & \luTo\ruTo & \cdots &\luTo\ruTo &             & \luTo(2,1)  & e_\ncell^n \\
      e^0_{+}       & \lTo      & e^1_{+} & \lTo       & \cdots & \lTo      & e^{n-1}_{+} & \ruTo(2,1)
    \end{diagram}
    \caption{The graph $G^\mM$}
    \label{fimatch}
  \end{figure}

  \noindent We call the cells $e_-^i$ minus-cells and the
  cells $e_+^i$ plus-cells and consider minus or plus as their signs.
  Let $G_m=(V_m, E_m)$ be the graph of the Hasse diagram
  of the cell decomposition induced on the product $(D^{n})^{m}$ with the vertex set
  $V_m=\{c_1\times\cdots\times c_m\mid c_i\in V_{D^{n}}\}$. For a
  cell $c=c_1\times\cdots\times c_m\in V_m$, define
  $\supp(c)=\{i\in[m]\mid c_i=e_\ncell^n\}$. Let $K$ be an arbitrary
  simplicial complex $K$ over the ground set $[m]$. The graph $G_K$
  is the subgraph of $G_m$ with the vertex set $V_K=\{c\in V_m\mid
  \supp(c)\in K\}$. We construct the matching $\mM_K$
  inductively. Let

  \[
    \mM_1=\{c\rightarrow c'\in E_K\mid
    c_1\rightarrow c'_1\in\mM\},
  \]
  \[
    \mM_{k+1}=\{c\rightarrow c'\in E_K\mid c,c'\in
    \Crit(\bigcup_{i=1}^{k}\mM_i), c_{k+1}\rightarrow
    c'_{k+1}\in\mM\},
  \]
  for $k=1,\ldots,m-1$. Define $\mM_K$ as the union
  $\mM_K=\bigcup_{k=1}^{m}\mM_k$. By construction,
  $\mM_K$ is a matching.

  \begin{Proposition}\label{p1}
    The matching $\mM_K$ is acyclic.
  \end{Proposition}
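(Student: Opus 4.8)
The plan is to show that the matching $\mM_K$, which is built by iterating a one-coordinate matching $\mM$ across the $m$ factors in the order $1, 2, \ldots, m$, cannot produce a directed cycle in $G_K^{\mM_K}$. The key structural observation is that the pieces $\mM_k$ are "layered" by coordinate: an edge in $\mM_k$ alters only the $k$-th coordinate of a cell, leaving coordinates $1, \ldots, k-1$ fixed (since both endpoints lie in $\Crit(\bigcup_{i<k}\mM_i)$) and leaving no constraint on coordinates $k+1, \ldots, m$ beyond being equal. I would first record this coordinatewise description precisely, together with the elementary fact about the base graph $G^\mM$ (Figure \ref{fimatch}): the reversed one-coordinate matching on $D^n$ is acyclic because the only directed paths in $G^\mM$ run monotonically, either strictly downward through $e^i_-$'s and $e^i_+$'s or exactly once upward through the reversed edge $e^{i-1}_- \to e^i_-$ at a single step, and no path revisits a cell.

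Next I would argue by contradiction. Suppose $\gamma = (d^{(0)} \to d^{(1)} \to \cdots \to d^{(N)} = d^{(0)})$ is a directed cycle in $G_K^{\mM_K}$. Each edge of $\gamma$ is either a non-matching edge of $E_K$ (decreasing dimension by one) or a reversed matching edge from some $\mM_k$ (increasing dimension by one and changing only the $k$-th coordinate). Let $k_0$ be the \emph{smallest} coordinate index that is changed by any edge along $\gamma$. Then along the whole cycle, coordinates $1, \ldots, k_0 - 1$ are constant; in particular every cell of $\gamma$ lies in $\Crit(\bigcup_{i<k_0}\mM_i)$, so on these cells the edges touching coordinate $k_0$ are exactly the edges of $\mM_{k_0}$ and their reverses, i.e. they are governed by the base graph $G^\mM$ in that coordinate. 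Projecting $\gamma$ to the $k_0$-th coordinate therefore yields a closed walk in $G^\mM$ (allowing stationary steps, which occur when an edge of $\gamma$ changes a coordinate $>k_0$ or is a non-matching edge not affecting coordinate $k_0$). Since $\gamma$ actually uses an edge changing coordinate $k_0$, this closed walk is non-trivial, contradicting acyclicity of $G^\mM$ established in the first step.

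To make the last step rigorous I would extract from $G^\mM$ the following monotonicity invariant in the $k_0$-coordinate: assign to each cell of $D^n$ a rank equal to its dimension, except that a reversed matching edge $e^{i}_+ \to e^{i+1}_-$ (resp.\ $e^{n-1}_+ \to e^n_\ncell$) is the only edge that increases the rank along coordinate $k_0$, and it does so only when moving from a plus-cell to a minus-cell (resp.\ to $e^n_\ncell$); a direct check on Figure \ref{fimatch} shows that once the projected walk has passed through such an edge it can never return to the cell it left, so no closed non-trivial walk exists. I expect the main obstacle to be bookkeeping: one must carefully justify that along $\gamma$ the edges affecting coordinate $k_0$ really are only those from $\mM_{k_0}$ (not from some later $\mM_{k}$ with $k>k_0$ — impossible, since those fix coordinate $k_0$ — nor from $\mM_i$ with $i<k_0$ — impossible by minimality of $k_0$), and that the alternation of dimension changes is consistent with the projection being a walk in $G^\mM$. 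Once the coordinatewise reduction is set up cleanly, acyclicity of the base graph does the rest.
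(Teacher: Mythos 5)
Your plan---project a hypothetical directed cycle $\gamma$ in $G_K^{\mM_K}$ onto the smallest changed coordinate $k_0$ and invoke acyclicity of the base graph $G^\mM$ of $D^n$---is related in spirit to what the paper does, but it takes a coordinatewise route rather than the paper's. The paper introduces the single global weight $\ell(c) = \sum_i \dim(c_i) + \#\{i : c_i \text{ is a plus-cell}\}$, shows that $\ell$ is weakly decreasing along every edge of $G_K^{\mM_K}$ and constant exactly on dimension-raising (reversed) edges, and concludes that a cycle would have to consist entirely of dimension-raising edges, which is impossible. The global weight avoids the bookkeeping your projection argument needs, and that bookkeeping is where your proposal breaks down.

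The decisive gap is the assertion that ``every cell of $\gamma$ lies in $\Crit(\bigcup_{i<k_0}\mM_i)$.'' This does \emph{not} follow from coordinates $1,\ldots,k_0-1$ being constant along $\gamma$. Whether a cell $d$ is matched in $\mM_j$ for some $j<k_0$ depends on whether $d$ and its potential $\mM$-partner $d^{(j)}$ in coordinate $j$ both belong to $V_K$ and both lie in $\Crit(\bigcup_{i<j}\mM_i)$; this in turn involves $\supp(d)$ and $\supp(d^{(j)})$, hence all coordinates of $d$ including those $\geq k_0$ that change along $\gamma$. Consequently, cells of $\gamma$ may well fail to be critical for $\bigcup_{i<k_0}\mM_i$. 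Once that can happen, the projection to coordinate $k_0$ need not be a walk in $G^\mM$: a \emph{non-matching} downward edge $d\to d'$ of $\gamma$ (an edge of $E_K\setminus\mM_K$) that changes coordinate $k_0$ can have $d_{k_0}\to d'_{k_0}$ equal to a matched pair of $\mM$ such as $e^{i+1}_-\to e^i_+$ or $e^n_\ncell\to e^{n-1}_+$, and those arrows point the \emph{wrong} way in $G^\mM$. For instance, already in $\zZ_{2^{[2]}}(D^1,S^0)$ the cell $(e^0_+,e^1_\ncell)$ is matched in $\mM_1$ with $(e^1_\ncell,e^1_\ncell)$, so the downward edge $(e^0_+,e^1_\ncell)\to(e^0_+,e^0_+)$---whose second-coordinate move $e^1_\ncell\to e^0_+$ is a matched $\mM$-pair---is a non-matching edge of $G_K^{\mM_K}$ whose projection to coordinate $2$ is \emph{not} an edge of $G^\mM$. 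Your final paragraph compounds the issue by listing only matching edges when enumerating which edges of $\gamma$ could affect coordinate $k_0$; non-matching downward edges can also do so, and those are precisely the ones your argument fails to control. Until you can rule out such edges (or replace the projection with a monotone global quantity, as the paper does), the contradiction you are after is not reached.
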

  \begin{proof}
    For each cell $c = c_1 \times \cdots \times c_m$ we denote by
    $\ell(c)$ the sum of the dimension of the $c_i$ plus the
    number of indices $i$ for which $c_i$ is a plus-cell.

    \noindent {\sf Claim:} The function $\ell(\cdot)$ is weakly
    decreasing on a directed path in $G^\mM$.
    \begin{proof}[Proof of Claim]
      Let $e = c \rightarrow c'$ be an edge in $G^\mM$ for
      $c = c_1 \times \cdots \times c_m$ and $c' = c_1' \times \cdots \times c_m'$.
      Then there is a unique index $i$ for which $c_i \neq c_i'$.

      \noindent {\sf Case 1:} $e$ is an edge from $G$. Then we have
      $\dim c_i = \dim c_i'+1$. Hence $\ell(c) = \ell(c') +1$
      if the signs of $c_i$ and $c_i'$ coincide and $\ell(c) = \ell(c')+2$ if the
      sign of $c_i$ is positive and the sign of $c_i'$ is negative.

      \noindent {\sf Case 2:} $e$ is not an edge from $G$. Then we have
      $\dim c_i = \dim c_i'-1$ and $c_i$ is a plus-cell and $c_i'$ is either a minus-cell or
      a $e_\ncell^n$.
      Thus $\ell(c) = \ell(c')$.
   \end{proof}

   Thus on a directed cycle
   $p:c^{1}\rightarrow\cdots\rightarrow c^{r} = c^{1}$ we must have
   $\ell(c^{1}) = \cdots = \ell(c^{r-1})$. From the proof of the claim it
   follows that on a directed cycle one can have only edges $e$ which increase the
   dimension of the cells. But this cannot lead to a directed cycle and hence
   the matching is acyclic.
  \end{proof}

  Since from now on we will only consider the matching $\mM_K$ defined above
  we write $\Crit(K)$ for $\Crit(\mM_K)$. In addition,
  we write $\Crit_k$ for  $\Crit(K|_{[k]}),k=1,2,\ldots m$ and
  $\Crit_{k\ncell}$ for $\Crit(\link_{K|_{[k+1]}}(k+1))$.
  The
  following proposition gives an inductive construction of the set
  of $\mM_K$-critical cells.
  We use the following convention, if $\cC$ is a set of cells then we denote
  by $\cC^-$ the set of cells $c \times e_-^0$ for $c \in \cC$, by $\cC^+$ the set of
  cells $c \times e_+^{n-1}$ for $c \in \cC$ and by $\cC^\ncell$ the set of cells
  $c \times e_\ncell^n$ for $c \in \cC$.

  \begin{Proposition}\label{p2}
    \begin{itemize}
      \item[(i)]
        $$\Crit_1=\left\{\begin{array}{cc}\{e_-^0 \}, & \{1\}\in K \\
            \{e_-^0,e_+^{n-1}\}, & \{1\}\notin K\end{array}\right..$$
      \item[(ii)]  For $k \in [m-1]$
        \[ \Crit_{k+1} =
          \Crit_k^- \cup \Crit_k^+ \cup \Crit_{k\ncell}^\ncell \setminus
          \Big( \Crit_k^\ncell \cup \Crit_{k\ncell}^+ \Big).
        \]
        In particular, any critical cell is a product of cells
        from the set $\{ e_-^0,e_+^{n-1},e_\ncell^n\}$.
    \end{itemize}
  \end{Proposition}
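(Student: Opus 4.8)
The plan is to settle (i) by direct inspection and (ii) by induction on $k$, the key ingredient being a ``slice'' analysis of the partial matchings $\mM_1,\dots,\mM_k$ under the addition of one coordinate. For (i), note that $K|_{[1]}$ is the full $0$-simplex $\{\emptyset,\{1\}\}$ when $\{1\}\in K$ and is $\{\emptyset\}$ otherwise, so $\zZ_{K|_{[1]}}(D^{n},S^{n-1})$ is $D^{n}$, respectively $S^{n-1}$. In the first case $V_{K|_{[1]}}=V_{D^{n}}$ and $\mM_1=\mM$, whose unique critical cell is $e_-^{0}$; in the second case $V_{K|_{[1]}}=V_{S^{n-1}}$, so the edge $e_\ncell^{n}\to e_+^{n-1}$ of $\mM$ is absent and $\mM_1=\{e_-^{i+1}\to e_+^{i}\mid i=0,\dots,n-2\}$, whose critical cells are $e_-^{0}$ and $e_+^{n-1}$. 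This gives (i).

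For (ii), fix $k\in[m-1]$ and put $L:=\link_{K|_{[k+1]}}(k+1)$, a simplicial complex on $[k]$, so that $\Crit_{k\ncell}=\Crit(L)$. The first step is to observe that, for $j\le k$, the piece $\mM_j$ built inside $\zZ_{K|_{[k+1]}}(D^{n},S^{n-1})$ never changes coordinate $k+1$, hence splits as a disjoint union, over $v\in V_{D^{n}}$, of its restrictions to the slices $\{c\in V_{K|_{[k+1]}}\mid c_{k+1}=v\}$. Writing $\widehat c:=c_1\times\cdots\times c_k$, I would then prove by induction on $j=1,\dots,k$ that the bijection $c\mapsto\widehat c$ identifies
\begin{itemize}
  \item[$\diamond$] for each $v\in V_{S^{n-1}}$, the slice $\{c_{k+1}=v\}$ carrying the restriction of $\bigcup_{i\le j}\mM_i$ with $\zZ_{K|_{[k]}}(D^{n},S^{n-1})$ carrying $\bigcup_{i\le j}\mM_i$, and
  \item[$\diamond$] the slice $\{c_{k+1}=e_\ncell^{n}\}$ carrying the restriction of $\bigcup_{i\le j}\mM_i$ with $\zZ_{L}(D^{n},S^{n-1})$ carrying $\bigcup_{i\le j}\mM_i$;
\end{itemize}
in particular $\widehat c\times v$ is $\bigcup_{i\le j}\mM_i$-critical exactly when $\widehat c$ is critical for the corresponding matching on the smaller complex. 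The base case $j=1$ is immediate from the definition of $\mM_1$ together with the facts $\supp(\widehat c\times v)=\supp(\widehat c)\subseteq[k]$ for $v\in V_{S^{n-1}}$ and $\supp(\widehat c\times e_\ncell^{n})=\supp(\widehat c)\cup\{k+1\}$, the latter lying in $K|_{[k+1]}$ exactly when $\supp(\widehat c)\in L$ (see \ref{rem:general}). In the inductive step, the constraint ``$c,c'\in\Crit(\bigcup_{i<j}\mM_i)$'' in the definition of $\mM_j$ is transported slicewise by the inductive hypothesis, while the remaining requirements ``$c_j\to c'_j\in\mM$'' and ``$c\to c'\in E_{K|_{[k+1]}}$'' are manifestly slicewise. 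Taking $j=k$ gives
\[
  \Crit\Big(\bigcup_{i\le k}\mM_i\Big)=\{\,\widehat c\times v\mid \widehat c\in\Crit_k,\ v\in V_{S^{n-1}}\,\}\ \cup\ \{\,\widehat c\times e_\ncell^{n}\mid \widehat c\in\Crit_{k\ncell}\,\}.
\]

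The second step computes $\Crit_{k+1}$ by removing from this set the cells covered by $\mM_{k+1}$. Since each edge of $\mM$ joins two distinct cells, any edge $c\to c'$ with $c_{k+1}\to c'_{k+1}\in\mM$ changes coordinate $k+1$ only, hence has the form $\widehat c\times w\to\widehat c\times w'$ with $w\to w'\in\mM$. The two types of edges in $\mM$ give: $\widehat c\times e_-^{i+1}\to\widehat c\times e_+^{i}$ with $0\le i\le n-2$, which belongs to $\mM_{k+1}$ iff $\widehat c\in\Crit_k$ (both endpoints have last coordinate in $V_{S^{n-1}}$); and $\widehat c\times e_\ncell^{n}\to\widehat c\times e_+^{n-1}$, which belongs to $\mM_{k+1}$ iff $\widehat c\in\Crit_k\cap\Crit_{k\ncell}$ (its source requires $\widehat c\in\Crit_{k\ncell}$, its target requires $\widehat c\in\Crit_k$); one checks at once that these edges do form a matching. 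Deleting the cells they cover from the displayed set leaves exactly
\[
  \Crit_k^{-}\ \cup\ \{\,\widehat c\times e_+^{n-1}\mid\widehat c\in\Crit_k\setminus\Crit_{k\ncell}\,\}\ \cup\ \{\,\widehat c\times e_\ncell^{n}\mid\widehat c\in\Crit_{k\ncell}\setminus\Crit_k\,\},
\]
and since these three sets have pairwise distinct last coordinates this is precisely $\big(\Crit_k^{-}\cup\Crit_k^{+}\cup\Crit_{k\ncell}^{\ncell}\big)\setminus\big(\Crit_k^{\ncell}\cup\Crit_{k\ncell}^{+}\big)$, proving (ii). The ``in particular'' follows by the same induction: $\Crit_1\subseteq\{e_-^{0},e_+^{n-1}\}$ by (i), and each passage $k\mapsto k+1$ appends only a cell from $\{e_-^{0},e_+^{n-1},e_\ncell^{n}\}$ to a cell of $\Crit_k$ or $\Crit_{k\ncell}$.

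I expect the main obstacle to be the first step: one must check carefully that the recursively defined matchings genuinely respect the slice decomposition -- in particular that ``$\Crit$'' computed inside $\zZ_{K|_{[k+1]}}(D^n,S^{n-1})$ restricts consistently to ``$\Crit$'' inside $\zZ_{K|_{[k]}}(D^n,S^{n-1})$ and inside $\zZ_L(D^n,S^{n-1})$, which is exactly what forces the auxiliary induction on $j$ -- and that $L=\link_{K|_{[k+1]}}(k+1)$ is indeed the complex governing the $e_\ncell^{n}$-slice. Once this is in place the rest is routine bookkeeping, whose only subtle point is the asymmetry of the $e_\ncell^{n}$--$e_+^{n-1}$ pairing: it is available only when $\widehat c\in\Crit_k\cap\Crit_{k\ncell}$, and this is what produces the set difference on the right-hand side of (ii).
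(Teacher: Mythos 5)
Your proof is correct and follows essentially the same inductive, coordinate-by-coordinate strategy as the paper's, but it is more careful: you first prove an auxiliary induction on $j$ establishing that the partial matchings $\mM_1,\ldots,\mM_k$ decompose along the $(k+1)$st-coordinate slices, yielding the clean intermediate identity $\Crit\big(\bigcup_{i\le k}\mM_i\big)=\{\widehat{c}\times v\mid\widehat{c}\in\Crit_k,\ v\in V_{S^{n-1}}\}\cup\{\widehat{c}\times e_\ncell^{n}\mid\widehat{c}\in\Crit_{k\ncell}\}$ before analyzing $\mM_{k+1}$. The paper works directly with a putative critical cell and argues case by case on $c_{k+1}$, and in the case $c_{k+1}=e_+^{n-1}$ asserts that $c\in\Crit_k$ forces $\supp(c)\cup\{k+1\}\notin K|_{[k+1]}$; your slice analysis shows this is stronger than needed (and in fact can fail, e.g.\ for $K=\{\emptyset,\{1\},\{2\},\{3\},\{2,3\}\}$ the cell $(+,\ncell,+)$ is critical although $\supp((+,\ncell))\cup\{3\}=\{2,3\}\in K$) — the correct obstruction is only $c\notin\Crit_{k\ncell}$, which is exactly what your accounting of the single asymmetric $e_\ncell^{n}\to e_+^{n-1}$ pairing produces. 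So what your route buys is precision: the same set-theoretic bookkeeping, but with the restriction-to-slice step made into a verifiable induction rather than an appeal to ``the inductive nature of the matching.''
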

  \begin{proof}
    The first claim follows from a simple inspection.

    Let $k \in [m-1]$ and $c_1\times\cdots\times c_{k+1} \in \Crit_{k+1}$ be a
    critical cell. Set $c = c_1 \times \cdots \times c_k$.
    \begin{itemize}
      \item[$\diamond$] $c_{k+1} \neq e_\ncell^n$: Then by our inductive construction it follows that
         $c \in \Crit_k$.
      \item[$\diamond$] $c_{k+1} = e_\ncell^n$. Then again the inductive construction implies that
         any matching of cells in $\zZ_{K|_{[k]}}(D^n,S^{n-1})$ that is not induced by
         matching a cell $c_i = e_{+}^{n-1}$ for some $1\leq i \leq k$ with $e_\ncell^n$ can also be applied to
         to $c \times c_{k+1}$.
    \end{itemize}
    It follows that $c_i \in \{ e_-^0,e_+^{n-1},e_\ncell^n \}$ for $1 \leq i \leq k$.

    Assume that $c_{k+1} \not\in \{ e_-^0,e_+^{n-1},e_\ncell^n \}$. Then there is
    a cell $c_{k+1}' \not\in  \{ e_-^0,e_+^{n-1},e_\ncell^n \}$ such that
    $c_{k+1}$ and $c_{k+1}'$ are matched in $\mM$.
    Since $\supp (c \times c_{k+1}) = \supp (c \times c_{k+1}')$ it follows that $c' \times
    c_{k+1}'$ is a cell from $\zZ_{K|_{[k+1]}}(D^n,S^{n-1})$.
    By the inductive
    nature of our matching the cells $c \times c_{k+1}$ and $c \times = c_1 \times \cdots \times c_k$
    are matched in $\mM_{K|_{[k+1]}}$.
    But this contradicts $c \times c_{k+1} \in \Crit_{k+1}$.
    Hence we have $c_{k+1} \in \{ e_-^0,e_+^{n-1},e_\ncell^n \}$ and we distinguish the three cases.

    \begin{itemize}
      \item[$\diamond$] $c_{k+1} = e_-^0$.

        From $c \in \Crit_k$ and the nature of $\mM$ it follows that the cell $c \times c_{k+1}$ is
        not matched in $\mM_K$. In particular, we have $c \times c_{k+1} \in \Crit_k^- \subseteq \Crit_{k+1}$.

      \item[$\diamond$] $c_{k+1} = e_+^{n-1}$.

        Then from $c \in \Crit_k$ it follows that $\supp(c) \cup \{k+1\} \not\in K|_{[k+1]}$.
        Hence $\supp(c) \not\in \link_{K|_{[k+1]}}(k+1)$. This implies $c \times c_{k+1} \in
        \Crit_k^+ \setminus \Crit_{k\ncell}^+ \subseteq \Crit_{k+1}$.

      \item[$\diamond$] $c_{k+1} = e_\ncell^n$.

        Then for $c_{k+1}' = e_+^{n-1}$ we have $c \times c_{k+1}'$ is a cell in
        $\zZ_{K|_{[k+1]}}(D^n,S^{n-1})$. Thus by definition of $\mM$ in order to have $c \times c_{k+1} \in \Crit_{k+1}$
        we need $c \times c_{k+1}' \not\in \Crit_{k+1}$. The latter implies $c \not\in \Crit_k$.
        Hence $c \times c_{k+1} \in \Crit_{k\ncell}^\ncell\setminus \Crit_k^\ncell \subseteq \Crit_{k+1}$.
    \end{itemize}
    Summarizing the three cases and observing that the set involved in any two of the three cases are
    mutually disjoint implies the assertion.
  \end{proof}

  By \ref{p2} we know that if $c_1 \times \cdots \times c_m$ is a critical cell then
  $c_i \in  \{e_-^0,e_+^{n-1},e_\ncell^n\}$ for $1 \leq i \leq m$.
  Hence from now on we can identify a critical cell with an $m$-tuple in $\{+,-,\ncell\}^m$.
  For a cell $c=c_1\times\cdots\times c_m\in V_m$ and $\sgn \in \{ +,-,\ncell\}$
  we define $c(\sgn) = -\infty$ if $\sgn \neq c_i$ for $1 \leq i \leq m$ and $c(\sgn)=\min\{i\mid c_i=\sgn\}$
  otherwise. The following is a simple corollary of \ref{p2}.

  \begin{Corollary}
    \label{c1}
    Let $c = c_1 \times \cdots \times c_m \in \Crit(K)$ be a critical cell and for $1 \leq i \leq m$ set $A_i = \supp(c) \cap \{i+1,\ldots, m\}$.  Then:
    \begin{itemize}
      \item[(i)] For $1 \leq i \leq m$ we have $c_1 \times \cdots \times c_i \in \Crit(\link_K(A_i)|_{[i]})$. If $i \in \supp(c)$ then
           $c_1 \times \cdots \times c_{i-1} \not\in \Crit(\link_K(A_i)|_{[i-1]})$.
      \item[(ii)] If $c(\ncell)  \neq -\infty$ then
         \begin{itemize}
             \item[(a)] $-\infty < c(+) < c(\ncell)$ and
             \item[(b)] if $c_i = \ncell$ for some $1 \leq i \leq m$ then
                there exists an $1 \leq j < i$ such that $c_j = +$, $\supp(c) \cup \{j\} \not\in K$ and
                $\supp(c) \setminus \{ i\} \cup \{j\} \in K$.
          \end{itemize}
      \item[(iii)] If $c(+) = - \infty$ then $c = (-,\ldots,-)$.
      \item[(iv)] If $c(+) \neq - \infty$ and $c_i = +$ for some $1 \leq i \leq m$ then we have $\supp(c) \cup \{i\} \not\in K$.
    \end{itemize}
  \end{Corollary}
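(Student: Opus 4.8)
The plan is to deduce everything from \ref{p2}. First I would record the elementary fact that, for every simplicial complex $L$, the all-$e_-^0$ vertex $o_L:=e_-^0\times\cdots\times e_-^0$ lies in $\Crit(L)$: by \ref{p2}(i) we have $e_-^0\in\Crit_1$ for every $L$, and the inductive description in \ref{p2}(ii) then gives $o_{L|_{[k+1]}}\in\Crit_k^-\subseteq\Crit_{k+1}$. For part~(i) I would argue by downward induction on $i$, the case $i=m$ being the hypothesis $c\in\Crit(K)$. For the step, put $L:=\link_K(A_{i+1})|_{[i+1]}$ and apply \ref{p2}(ii) to $L$ (with ground set $[i+1]$ and $k=i$); using that taking links commutes with restriction and $\link_{\link_K(\sigma)}(v)=\link_K(\sigma\cup\{v\})$ one has $\Crit_i(L)=\Crit(\link_K(A_{i+1})|_{[i]})$ and $\Crit_{i\ncell}(L)=\Crit(\link_K(A_{i+1}\cup\{i+1\})|_{[i]})$. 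Since $c_1\times\cdots\times c_{i+1}\in\Crit_{i+1}(L)$, the formula of \ref{p2}(ii) puts $c_1\times\cdots\times c_i$ into $\Crit_i(L)$ if $c_{i+1}\in\{e_-^0,e_+^{n-1}\}$ and into $\Crit_{i\ncell}(L)$ if $c_{i+1}=e_\ncell^n$; as $A_i=A_{i+1}$, resp.\ $A_i=A_{i+1}\cup\{i+1\}$, in these two cases, both give $c_1\times\cdots\times c_i\in\Crit(\link_K(A_i)|_{[i]})$. For the last clause of~(i), when $i\in\supp(c)$ the cell $c_1\times\cdots\times c_i$ ends in $e_\ncell^n$, so one more application of \ref{p2}(ii) (to $\link_K(A_i)|_{[i]}$, $k=i-1$) puts $c_1\times\cdots\times c_{i-1}$ into $\Crit_{(i-1)\ncell}\setminus\Crit_{i-1}$, hence in particular out of $\Crit(\link_K(A_i)|_{[i-1]})$.

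Parts (iii), (ii)(a) and (iv) then follow quickly. For (iii): if $c(+)=-\infty$ but $c$ had an $\ncell$-coordinate, taking $i:=c(\ncell)$ one sees that $c_1\times\cdots\times c_{i-1}=o_{\link_K(A_i)|_{[i-1]}}$, which is critical, contradicting the last clause of~(i). Part (ii)(a) is the same argument: with $i:=c(\ncell)$ the coordinates $c_1,\dots,c_{i-1}$ lie in $\{e_-^0,e_+^{n-1}\}$, and if none were $e_+^{n-1}$ then again $c_1\times\cdots\times c_{i-1}=o_{\link_K(A_i)|_{[i-1]}}$ would be critical; so some $c_j=e_+^{n-1}$ with $j<i$, giving $-\infty<c(+)<c(\ncell)$. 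For (iv): part~(i) gives $c_1\times\cdots\times c_i\in\Crit(\link_K(A_i)|_{[i]})$ with last coordinate $e_+^{n-1}$, whence also $c_1\times\cdots\times c_{i-1}\in\Crit(\link_K(A_i)|_{[i-1]})$ by \ref{p2}(ii); if $\supp(c_1\times\cdots\times c_{i-1})\cup\{i\}$ were a face of $\link_K(A_i)|_{[i]}$ the cell $(c_1\times\cdots\times c_{i-1})\times e_\ncell^n$ would exist and be matched with $c_1\times\cdots\times c_i$ in round $i$, contradicting criticality; so $\supp(c_1\times\cdots\times c_{i-1})\cup\{i\}\notin\link_K(A_i)|_{[i]}$, which after adjoining $A_i$ (and using $i\notin\supp(c)$) is exactly $\supp(c)\cup\{i\}\notin K$.

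For part (ii)(b) I would first use~(i) to reduce to the case $i=m$: replacing $K$ by $\link_K(A_i)|_{[i]}$ and $c$ by $c_1\times\cdots\times c_i$, we may assume $c_m=e_\ncell^n$ (the translation back to the original statement being routine, since part~(i) identifies the restricted critical cell), so $c=c'\times e_\ncell^n$ with $c':=c_1\times\cdots\times c_{m-1}$, and \ref{p2}(ii) gives $c'\in\Crit(\link_K(m))$, $c'\notin\Crit(K|_{[m-1]})$, $\supp(c)=\supp(c')\cup\{m\}$. Since part~(iv) already supplies $\supp(c)\cup\{j\}\notin K$ for every $j<m$ with $c_j=e_+^{n-1}$, it suffices to find $j$ with $c'_j=e_+^{n-1}$ and $\supp(c')\cup\{j\}\in K|_{[m-1]}$. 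Now $c'$ is matched in $\mM_{K|_{[m-1]}}=\bigcup_k\mM_k$; let $j^\ast$ be the unique index with $c'$ lying in an edge of $\mM_{j^\ast}$, so this edge alters coordinate $j^\ast$. As the coordinates of $c'$ lie in $\{e_-^0,e_+^{n-1},e_\ncell^n\}$ and $e_-^0$ is neither a source nor a target of $\mM$, either $c'_{j^\ast}=e_+^{n-1}$ or $c'_{j^\ast}=e_\ncell^n$. If $c'_{j^\ast}=e_+^{n-1}$, the partner of $c'$ is $c'$ with coordinate $j^\ast$ turned into $e_\ncell^n$, a cell of $\zZ_{K|_{[m-1]}}(D^n,S^{n-1})$; hence $\supp(c')\cup\{j^\ast\}\in K|_{[m-1]}$ and $j=j^\ast$ works. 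In the remaining case $c'_{j^\ast}=e_\ncell^n$ ($c'$ matched ``downward''), one must chase further: because $\link_K(m)$ is a subcomplex of $K|_{[m-1]}$ on the same ground set, one can induct on the first round at which the matchings $\mM_{\link_K(m)}$ and $\mM_{K|_{[m-1]}}$ disagree along the relevant chain of cells, peeling off one coordinate at a time while keeping the support inside the smaller complex, until an ``upward'' match emerges and again pins down the required $j$.

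I expect the ``downward match'' case of~(ii)(b) to be the main obstacle. Everything else is a straightforward reading of \ref{p2}, but in that case the naive reduction does not close in a single step: it genuinely requires an induction on the round number, comparing $\mM_{\link_K(m)}$ with $\mM_{K|_{[m-1]}}$, and one has to keep careful track that, as coordinates are removed, the truncated support still lies in the relevant complex --- a point that is easy to state but delicate to verify.
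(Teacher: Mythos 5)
Your proposal follows essentially the same route as the paper for parts~(i), (iii), (iv) and (ii)(a), and those parts are fine. Your argument for (i) by downward induction on $i$ via \ref{p2}(ii) is a slightly more explicit unwinding of the paper's one-line ``extend a match in $\mM_{\link_K(A_i)|_{[i]}}$ to a match in $\mM_K$'' step, and the derivations of (iii), (ii)(a) and (iv) from (i) and \ref{p2} coincide with the paper's.

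The one place your write-up is genuinely incomplete is exactly the place you flag yourself: part~(ii)(b), the ``downward match'' case. After reducing to $i=m$, you observe that $c'=c_1\times\cdots\times c_{m-1}$ is matched in $\mM_{K|_{[m-1]}}$ at some round $j^\ast$, and that if $c'_{j^\ast}=e_+^{n-1}$ you are done; but you do not actually rule out $c'_{j^\ast}=e_\ncell^n$, only sketch a proposed induction comparing $\mM_{\link_K(m)}$ with $\mM_{K|_{[m-1]}}$ that you do not carry through. That is a real gap in the proposal as written --- without it, (ii)(b) is not proved. It is worth noting, however, that the paper's own proof of (ii)(b) is exactly as terse here: it passes directly from ``$c'$ is not a critical cell of $\mM_{\link_K(A_i)|_{[i-1]}}$'' to ``Thus for some $1\le j<i$ such that $c_j=+$ we have $\supp(c)\setminus\{i\}\cup\{j\}\in K$'', with no argument excluding the possibility that the matching edge of $c'$ decreases dimension. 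So you have correctly isolated the delicate step rather than invented a new difficulty; but to have a complete proof you still need to justify why the first coordinate $j$ at which $c'$ is matched in $\mM_{\link_K(A_i)|_{[i-1]}}$ necessarily satisfies $c_j=+$ (equivalently, why $c'$, being critical in the subcomplex $\link_K(A_{i-1})|_{[i-1]}$, cannot be matched downward in the larger complex), and that argument does not fall out of \ref{p2} in a single line.
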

  \begin{proof}
    \begin{itemize}
      \item[(i)]
         Suppose there is a cell $c_1' \times \cdots \times c_i'$ matched with $c_1 \times \cdots \times c_i$ in
         $\mM_{\link_K(A_i)|_{[i]}}$ then by the inductive construction of the matching, the
         cells $c$ and $c' = c_1' \times \cdots \times c_i' \times c_{i+1} \times \cdots \times c_m$
         are matched in $\mM_K$. But this contradicts the assumption that $c \in \Crit(K)$.
         If $c_i = \ncell$ then it is an immediate consequence of \ref{p2} that $c_1 \times \cdots \times c_{i-1} \not\in \Crit(\link_K(A_i)|_{[i-1]})$.
       \item[(iv)] Suppose there is an $1 \leq i \leq m$ such that $c_i = +$ and $\supp(c) \cup \{i\} \in K$.
         Then the cells $c_1 \times \cdots \times c_{i}$ and $ c_1 \times \cdots \times c_{i-1} \times \ncell$ are matched in $\mM_{\link_K(A_i)|_{[i]}}$
         which contradicts (i).
       \item[(ii)]
         (a) Let $i$ be $c(\ncell)$. Then (i) implies that $c_1 \times \cdots \times c_{i-1} \not\in \Crit(\link_K(A_i)|_{[i-1]}))$.
         Thus $c_1 \times \cdots \times c_{i-1} \neq (-, \ldots, -)$. Therefore by definition of $i = c(\ncell)$ it follows
         that there is an index $1  \leq j \leq i-1$ such that $c_j = +$. In particular, we have
         $-\infty < c(+) < i = c(\ncell)$.

         (b) Let $i$ be such that $c_i = \ncell$. Then by (i) $c' := c_1 \times \cdots \times c_{i-1}$ is not a critical cell of
         $\mM_{\link_K(A_i)|_{[i-1]}}$. Thus for some $1 \leq j < i$ such that $c_j = +$ we have
         $\supp(c)  \setminus \{i\} \cup \{j\} \in K$. The second assertion follows from (iv)
      \item[(iii)]
         If $c(+) = -\infty$ then it follows from (ii)(a) that $c(\ncell) = -\infty$ and hence
         $c = (-,\ldots, -)$.
    \end{itemize}
  \end{proof}

  \begin{Example}
    Let $K$ be the $4$-gon with set of facets $\{13,14,23,24\}$. Then the set of
    $\mM_K$-critical cells is equal to
    $\Crit(K)=\{(-,-,-,-), (-,-,+,\ncell), (+,\ncell, -,-),(+,\ncell, +,\ncell)\},$ which
    form $\zZ_K(D^{n},S^{n-1})=S^{2n-1}\times S^{2n-1}$.
  \end{Example}

  \begin{Lemma}
     \label{lem:simplex}
     Let $K$ be a simplicial complex on ground set $[m]$ and
     $c = c_1 \times \cdots \times c_m \in \mM(K)$ a cell such that
     $c(+) \neq -\infty$ and $c_i = +$ if and only if $i = c(+)$.
     Let $I = \{ c(+) \} \cup \supp(c)$.
     Then $c \in \Crit(K)$ if and only if $c_I \in \Crit(K_I)$ and $K_I
     = 2^I \setminus \{I\}$.
  \end{Lemma}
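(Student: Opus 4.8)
The plan is to prove both implications by peeling off the index $c(+)$ and relating critical cells of $K$ to critical cells of the link at the support, using Corollary~\ref{c1} and the inductive description of $\Crit$ in Proposition~\ref{p2}. Throughout, write $p = c(+)$, so by hypothesis $c_p = +$, $c_i \in \{-,\ncell\}$ for $i \neq p$, and $I = \{p\} \cup \supp(c)$. Note that by Corollary~\ref{c1}(ii)(a), if $\supp(c) \neq \emptyset$ then $p < \min \supp(c)$; and by Corollary~\ref{c1}(iv), $\supp(c) \cup \{p\} = I \notin K$.

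\medskip

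\noindent\emph{Step 1: $c \in \Crit(K)$ implies $K_I = 2^I \setminus \{I\}$.}
I would argue that every proper subset of $I$ lies in $K$. First, $\supp(c) \in K$ because $c$ is a cell of $\zZ_K(D^n,S^{n-1})$, so $\supp(c) = I \setminus \{p\} \in K$. It remains to show $I \setminus \{j\} \in K$ for each $j \in \supp(c)$. Here I invoke Corollary~\ref{c1}(ii)(b) applied to the index $i = j$: since $c_j = \ncell$, there is some $j' < j$ with $c_{j'} = +$, hence $j' = p$ (the unique plus-index), and $\supp(c) \setminus \{j\} \cup \{p\} = I \setminus \{j\} \in K$. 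Together with $I \notin K$ (from Corollary~\ref{c1}(iv) as above) this gives exactly $K_I = 2^I \setminus \{I\}$.

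\medskip

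\noindent\emph{Step 2: reducing criticality of $c$ to criticality of $c_I$.}
The key observation is that the indices $i \notin I$ all carry $c_i = e_-^0$, the bottom cell, and by the inductive construction of $\mM_K$ a factor $e_-^0$ is never part of a matched pair that would be ``reversed'' by the presence or absence of other factors; formally, using Corollary~\ref{c1}(i) with $A_i = \supp(c) \cap \{i+1,\dots,m\}$, the tail $c_1 \times \cdots \times c_i$ is critical for $\mM_{\link_K(A_i)|_{[i]}}$ exactly when the corresponding restricted tuple is critical for the link restricted to $I \cap [i]$. I would make this precise by showing that the matching $\mM_K$ restricted to the fiber over cells with support-pattern agreeing with $c$ outside $I$ is isomorphic, as a matched poset, to the matching $\mM_{K_I}$ on $\zZ_{K_I}(D^n,S^{n-1})$ — this is a bookkeeping argument tracking which factors get matched at each stage $k = 1,\dots,m$ and using that a minus-factor $e_-^0$ at position $i \notin I$ contributes nothing to any matching decision. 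Hence $c \in \Crit(K) \iff c_I \in \Crit(K_I)$, once we know $K_I = 2^I \setminus \{I\}$.

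\medskip

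\noindent\emph{Step 3: the converse direction.}
For the ``if'' direction, assume $c_I \in \Crit(K_I)$ and $K_I = 2^I \setminus \{I\}$. The same fiber-isomorphism of matchings from Step~2 shows $c_1 \times \cdots \times c_m$ is unmatched in $\mM_K$: any potential matching edge at $c$ in $\mM_K$ would, by the inductive construction, restrict to a matching edge at $c_I$ in $\mM_{K_I}$, contradicting $c_I \in \Crit(K_I)$. The only subtlety is that passing from $K_I$ to $K$ enlarges the ambient complex, so one must check no \emph{new} matching edge appears — but any edge in $G_K$ incident to $c$ changes exactly one factor, and changing a factor at a position outside $I$ is impossible (those factors are the bottom cell $e_-^0$, which is matched upward only to a cell of $K$-support strictly containing $\supp(c)$, and such a matching is already accounted for in the $K_I$-picture once we note $I \notin K$).

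\medskip

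\noindent\emph{Main obstacle.} The genuinely delicate part is Step~2: making rigorous the claim that the matching $\mM_K$ ``localizes'' to the support set $I$, i.e.\ that the minus-factors at positions outside $I$ are inert. This requires carefully unwinding the inductive definition $\mM_{k+1} = \{ c \to c' \mid c,c' \in \Crit(\bigcup_{i\le k}\mM_i),\ c_{k+1}\to c'_{k+1}\in\mM\}$ and verifying that the criticality conditions accumulated through stages indexed by $[m] \setminus I$ impose no constraint. I expect this to follow cleanly from Proposition~\ref{p2}(ii) by induction on $m - \#I$, peeling off the largest index not in $I$ at each step and using that $\Crit^-_k \subseteq \Crit_{k+1}$ unconditionally, whereas the other pieces of the recursion only interact with plus- and $\ncell$-factors.
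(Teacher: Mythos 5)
The overall shape of your argument is correct, and \emph{Step 1} (that $c\in\Crit(K)$ forces $K_I=2^I\setminus\{I\}$) is exactly the paper's argument: use \ref{c1}(iv) to get $I\notin K$, \ref{c1}(ii)(b) applied at each $j\in\supp(c)$ to get $I\setminus\{j\}\in K$, and $\supp(c)\in K$ to get $I\setminus\{c(+)\}\in K$. Where you diverge is in Steps~2 and~3. The paper does \emph{not} prove a localization/fiber-isomorphism; instead, for the converse it peels prefixes strictly from the right using \ref{p2}(ii): it strips the trailing minus-coordinates $j>\max I$, then analyzes the single coordinate $\max I=\max\supp(c)$ (a $\ncell$), which by \ref{p2}(ii) produces a two-part condition (``in $\Crit(\link)$ but not in $\Crit$ of the deletion''), and then derives a contradiction from the hypothesis $K_I=2^I\setminus\{I\}$ in both branches. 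Your fiber-isomorphism idea --- that the minus-factors at all positions in $[m]\setminus I$, wherever they sit, are inert so that $\mM_K$ restricted to the fiber over the fixed minus-pattern is literally $\mM_{K_I}$ --- is true and in fact gives a cleaner, stronger statement: it yields $c\in\Crit(K)\iff c_I\in\Crit(K_I)$ unconditionally, which together with your Step~1 (applied once in $K$ and once internally in $K_I$) makes the hypothesis $K_I=2^I\setminus\{I\}$ a byproduct rather than an assumption. So this is a genuinely different route that would also give a reusable general lemma about the matching.

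There is, however, a real gap in how you propose to justify it. First, you cite \ref{c1}(i), but that is a one-way consequence of criticality, not an equivalence, so it cannot by itself deliver the iff. Second, and more importantly, your fallback plan --- ``induction on $m-\#I$, peeling off the largest index not in $I$ at each step via \ref{p2}(ii), using $\Crit_k^-\subseteq\Crit_{k+1}$'' --- does not go through as stated: \ref{p2}(ii) only peels off the \emph{last} coordinate $m$, and the largest $j^*\notin I$ is typically $<\max I=\max\supp(c)\le m$ (e.g.\ $c=(-,-,+,\ncell)$ has $I=\{3,4\}$ and $j^*=2$). So to reach $j^*$ you must first peel off the trailing coordinates of $I$ itself, and those peels are $\ncell$ and $+$ coordinates whose \ref{p2}(ii) recursion changes the ambient complex to a link and introduces a negated membership condition; $\Crit_k^-\subseteq\Crit_{k+1}$ does not help you there. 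What actually works is either (a) the direct ``bookkeeping'' argument you allude to: the elementary matching $\mM$ has no edge incident to $e_-^0$, so the set of cells carrying $e_-^0$ in a fixed coordinate $j$ is closed under every $\mM_l$, and a coordinate-by-coordinate induction then identifies the matching restricted to that fiber with $\mM_{K\setminus j}$; iterating over $[m]\setminus I$ gives the isomorphism with $\mM_{K_I}$; or (b) a more careful induction via \ref{p2}(ii) that peels $m$ (not $j^*$) and treats all three cases $c_m\in\{-,+,\ncell\}$, rewriting the $\ncell$-case's two conditions for $K$ and for $K\setminus j^*$ in parallel. Either can be made rigorous, but neither ``follows cleanly'' in the way you suggest, so as written the proof is incomplete at precisely the step you flag as the main obstacle.
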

  \begin{proof}
     By \ref{c1} (ii)(a) we can assume that $I = \{ c(+) < i_1 <
     \ldots < i_k \}$.

     \begin{itemize}
       \item[$\Rightarrow$]
     If $c \in \Crit(K)$ then by \ref{c1} (ii)(b) $I \not\in K$
     and $I \setminus \{ i_j \} \cup \{ c(+) \} \in K$
     for $1 \leq j \leq k$.
     Since $\supp(c) \in K$ be definition we have that
     $K_I = 2^I \setminus \{I\}$. A simple check shows that
     $c_I$ is critical for $K_I$.
       \item[$\Leftarrow$]
    If $K_I =  2^I \setminus \{I\}$.
    Distinguish $\# I = 1$ and $\# I > 1$. $\# I = 1$
    then $I = \{c(+)\}$ and by \ref{p2} $c$ is critical in $K_I$ if and only if
    $I \not \in K$. Thus $K_I = \{ \emptyset \} = 2^I \setminus \{I\}$ and
    again by \ref{p2} $c$ is critical in $K$.
    Now let $\# I \geq 1$.
    Assume that $c \not\in \Crit(K)$. Then by \ref{p2} (ii) the cell
    $c = c_1 \times \cdots \times c_{i_k-1}$ either lies in $\Crit_{i_k-1}$ or
    does not lie $\Crit(\link_K(i_k))$. In the first case $I \setminus \{ i_k \}
    \not\in K$. Which contradicts $K_I = 2^I \setminus \{I\}$. In the second case
    we must have $I \setminus \{i_k\} \in \link_K(i_k)$ and hence $I \in K$.
    Again this contradicts $K_I = 2^I \setminus \{I\}$.
    Thus $c \in \Crit(K)$ and we are done.
    \end{itemize}
  \end{proof}

\section{Proof of \ref{t1} and \ref{co:susp}}
  \label{sec:proof}

  Let $K$ be a simplicial complex over the ground set $[m]$ with
  vertex decomposable Alexander dual $K^\dual$, such that $\{i\}\in K$ for all
  $i\in[m]$ and $m$ is a shedding vertex. By \ref{t3}, the $n$-sphere
  moment-angle complex $\zZ_K(D^n,S^{n-1})$ is homotopy equivalent to the
  space $\zZ_K(D^n,S^{n-1})^{\mM_K}$. We need to see how
  $\mM_K$-critical cells are glued to form the
  space $\zZ_K(D^n,S^{n-1})^{\mM_K}$. We need some preparatory lemmas.

  For a cell $c = c_1 \times \cdots \times c_m$ define
  $$J(c) := \left\{ \begin{array}{cc} -\infty                           & \mbox{~if~} c(+) = -\infty \\
                                      \max\{ i ~|~ c_i = +\}            & \mbox{~if~} c(\ncell) = -\infty < c(+) \\
                                      \max\{ i < c(\ncell)~|~ c_i = +\} & \mbox{~if~} -\infty \neq c(\ncell)
                    \end{array} \right.$$

  \begin{Proposition}
    \label{pr:crucial}
    Let $K$ be a simplicial complex on vertex set $[m]$ with vertex
    decomposable Alexander dual $K^\dual$. If $c = c_1 \times \cdots \times c_m \in \Crit(K)$
    then $\tilde{c} = \tilde{c_1} \times \cdots \times \tilde{c_m}
    \in \Crit(K)$ for
    $$\tilde{c_i} = \left\{ {{c_i \mbox{~if~} c_i \in \{ -,\ncell\} \mbox{~or~} i = J(c)} \atop
                             {-   \mbox{~if~} c_i = + \mbox{~and~} i \neq J(c)}}. \right.$$
  \end{Proposition}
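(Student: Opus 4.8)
The plan is to prove the slightly reformulated assertion that every $c=c_1\times\cdots\times c_m\in\Crit(K)$ with $c(+)\neq-\infty$ satisfies $K_I=2^I\setminus\{I\}$, where $I:=\{J(c)\}\cup\supp(c)$. Since $\tilde c$ carries exactly one plus-cell, sitting in coordinate $J(c)=\tilde c(+)$ (here one uses $J(c)<c(\ncell)=\min\supp(c)$ from \ref{c1}(ii)(a) when $c$ has an $e_\ncell^n$), and since $\supp(\tilde c)=\supp(c)$, the set attached to $\tilde c$ in \ref{lem:simplex} is exactly this $I$; hence, by \ref{lem:simplex}, $\tilde c\in\Crit(K)$ amounts to $K_I=2^I\setminus\{I\}$ (the side condition $\tilde c_I\in\Crit(K_I)$ being automatic from that equality, again by \ref{lem:simplex}). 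When $c(+)=-\infty$ one has $c=\tilde c$ by \ref{c1}(iii), so there is nothing to do. I would prove the reformulated claim by induction on $m$, the case $m=1$ being immediate from \ref{p2}(i). In the step I use the standing hypothesis that $m$ is a shedding vertex of $K^\dual$, so that by \ref{l3} both $\link_K(m)$ and $K\setminus m$ have vertex-decomposable Alexander duals, and I distinguish cases on $c_m$, writing $c'=c_1\times\cdots\times c_{m-1}$.

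If $c_m$ is the minus-cell $e_-^0$, or the plus-cell $e_+^{n-1}$ with $c$ containing some $e_\ncell^n$, then \ref{p2}(ii) forces $c'\in\Crit(K\setminus m)$, and moreover $\supp(c')=\supp(c)$, $c'(+)=c(+)$ and $J(c')=J(c)$: deleting a trailing minus-cell, or a trailing plus-cell lying past the first $e_\ncell^n$, alters none of the data defining $I$. So $I\subseteq[m-1]$ is unchanged, and the inductive hypothesis for $K\setminus m$ gives $(K\setminus m)_I=2^I\setminus\{I\}=K_I$. If $c_m=e_+^{n-1}$ and $c$ has no $e_\ncell^n$, then $\supp(c)=\emptyset$, $J(c)=m$, $I=\{m\}$, and \ref{c1}(iv) gives $\{m\}\notin K$, so $K_I=\{\emptyset\}=2^I\setminus\{I\}$; this subcase disappears once $\{i\}\in K$ for all $i$.

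The decisive case is $c_m=e_\ncell^n$, so $m\in\supp(c)$. Here \ref{p2}(ii) gives $c'\in\Crit(\link_K(m))$ and $c'\notin\Crit(K\setminus m)$, with $J(c')=J(c)$ and $\supp(c')=\supp(c)\setminus\{m\}$; put $I':=\{J(c)\}\cup\supp(c')=I\setminus\{m\}$. The inductive hypothesis applied to $(\link_K(m),c')$ yields $(\link_K(m))_{I'}=2^{I'}\setminus\{I'\}$, which shows $S\cup\{m\}\in K$ for every $S\subsetneq I'$, i.e. settles every proper subset of $I$ containing $m$; and $I\notin K$ follows from \ref{c1}(iv) since $c_{J(c)}$ is a plus-cell. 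It remains to see $I'\in K$, and I would argue by contradiction: if $I'\notin K$ then $I'$ is a minimal non-face of $K$, hence of $K\setminus m$, while $(\link_K(m))_{I'}=2^{I'}\setminus\{I'\}$ says $I'$ is also a minimal non-face of $\link_K(m)$; by the Alexander-duality dictionary of \ref{rem:general} this means $[m-1]\setminus I'$ is simultaneously a facet of $\link_{K^\dual}(m)=(K\setminus m)^\dual$ and of $K^\dual\setminus m=(\link_K(m))^\dual$, contradicting that $m$ is a shedding vertex of $K^\dual$. Hence $I'\in K$ and $K_I=2^I\setminus\{I\}$; equivalently $\tilde c=\tilde{c'}\times e_\ncell^n\in\Crit(K)$, which one can also read off directly from \ref{p2}(ii) using $\tilde{c'}\in\Crit(\link_K(m))$ (inductive hypothesis plus \ref{lem:simplex}) and $\tilde{c'}\notin\Crit(K\setminus m)$ (the same shedding argument applied to $\tilde{c'}$).

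I expect the case $c_m=e_\ncell^n$ to be the real obstacle: it is precisely the shedding property of $m$, transported from $K^\dual$ to the pair $\bigl(\link_K(m),K\setminus m\bigr)$ through \ref{rem:general}, that prevents $I'$ from being a common minimal non-face, and without it the conclusion fails (as one sees on small examples with non–vertex-decomposable dual, or with a badly chosen last vertex). Two further bookkeeping points need attention. First, one must check at each stage that $\tilde c$ and $\tilde{c'}$ really have their unique plus-cell in the coordinate recorded by $\tilde c(+)$, resp. $\tilde{c'}(+)$, so that \ref{lem:simplex} applies; this is where \ref{c1}(ii)(a) enters. Second, the induction must be set up so that passing to $\link_K(m)$ (and, in the first group of cases, to $K\setminus m$) preserves the standing hypotheses — in particular that the largest remaining vertex is again a shedding vertex of the relevant Alexander dual — which comes down to ordering the ground set compatibly with a shedding sequence of $K^\dual$.
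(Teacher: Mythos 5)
Your proof is correct and follows essentially the same inductive strategy as the paper's: peel off the last coordinate via \ref{p2}, use \ref{lem:simplex} to identify $\{J(c)\}\cup\supp(c)$ as a minimal nonface, and invoke the shedding-vertex property of $m$ in the decisive case $c_m = e_\ncell^n$ to rule out a common minimal nonface of $\link_K(m)$ and $K\setminus m$. Your reformulation via $K_I = 2^I\setminus\{I\}$ is a cleaner packaging of the same argument, and you correctly flag the need for the vertex ordering to be compatible with a shedding sequence of $K^\dual$ throughout the recursion, a bookkeeping point the paper's proof leaves implicit.
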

  \begin{proof}
    We proceed by induction on $m$.
    If $m =1$ then either $K = \{ \emptyset \}$ or $K = \{ \emptyset , \{1\}\}$. In the
    first case $\Crit(K) = \{ -,+\}$ and for $c \in \Crit(K)$ we have $c = \tilde{c}$.
    In the second case $\Crit(K) = \{-\}$ and again for $c  = \tilde{c}$ for $c \in \Crit(K)$.

    Assume $m \geq 2$. In the following for $c \in \Crit(K)$ for which $c(\ncell) \neq -\infty$ we
    set $m(c) := \max \{ i ~|~c_i = \ncell\}$.
    We distinguish two cases:
    \begin{itemize}
       \item[$\rightarrow$]  $c(\ncell) = - \infty$

         This condition is equivalent to $\supp(c) = \emptyset$. Hence either $\tilde{c} = (-, \ldots,-)$
         or $\tilde{c} = (-,\ldots,-,+,-,\ldots,-)$ with $+$ in
         position $J(c)$. In the latter case we argue as follows.
         In the first case we clearly have $\tilde{c} \in \Crit(K)$. Assume the second case.
         Since $c$ is critical by \ref{p2} we must have that
         $\{ J(c) \} \not\in K$. The latter implies that $\tilde{c}$ is critical as well.

       \item[$\rightarrow$] $c(\ncell) \neq -\infty$ and $m(c) = m$.

         For this direction we make use of the notation used in \ref{p2}

         In this case \ref{p2} implies that $(c_1, \ldots, c_{m-1}) \in  \Crit(\link_K(m))$.
         By induction it follows that $\tilde{c}_1 \times \cdots \times \tilde{c}_{m-1}\in \Crit(\link_K(m))$
         and hence $\tilde{c} \in \Crit_{m-1\ncell}^\ncell$.
         The construction of $\tilde{c}$ allows to apply \ref{lem:simplex} which shows that
         $N := \{ J(c) \} \cup \supp(c) \setminus \{ m\} = \{ J(\tilde{c})\} \cup \supp(\tilde{c}) \setminus \{m\}$
         is a minimal nonface of $\link_K(m)$.

         \noindent {\sf Assumption:} $\tilde{c} \in \Crit_{m-1}^\ncell$

         \noindent $\triangleleft$ We infer that $(\tilde{c}_1, \ldots, \tilde{c}_{m-1}) \in \Crit(K \setminus m)$.
         Again by \ref{lem:simplex} we have that $N$ is a minimal non-face of $K \setminus m$.
         Thus $N$ is a minimal nonface of $\link_K(m)$ that is also a minimal non-face of $K \setminus m$.
         This shows that $[m-1] \setminus N$ is a facet of $K^\dual \setminus m$ and $\link_{K^\dual}(m)$.
         But this contradicts the fact that $m$ is a shedding vertex of $K^\dual$. Hence we have
         deduced a contradiction and the assumption is false.
         $\triangleright$

         Now we know that $\tilde{c} \not\in \Crit_{m-1}^\ncell$ and $\tilde{c} \in \Crit_{m-1\ncell}^\ncell$.
         Again \ref{p2} shows that $\tilde{c} \in \Crit(K)$.

       \item[$\rightarrow$] $c(\ncell) \neq -\infty$ and $m(c) < m$.
         In this case by \ref{p2} we have that $c_1 \times \cdots \times c_{m(c)} \in \Crit(K \setminus m \setminus \cdots \setminus m(c)+1)$.
         Since \ref{p2} also shows that $\tilde{c} \in \Crit(K)$ if and only if
         $\tilde{c}_1 \times \cdots \tilde{c}_{m(c)} \in \Crit(K \setminus m \setminus \cdots \setminus m(c)+1)$ the assertion
         follows by induction.
    \end{itemize}
  \end{proof}

  \begin{Corollary}
     \label{cor:nonface}
     Let  $K$ be a simplicial complex with vertex decomposable Alexander dual $K^\dual$.
     Then for $c \in \Crit(K)$ with $c(\ncell) \neq -\infty$ the set $\{ J(c) \} \cup \supp(c)$ is a minimal non-face of $K$.
  \end{Corollary}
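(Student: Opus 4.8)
The plan is to derive the corollary directly from Proposition~\ref{pr:crucial} and Lemma~\ref{lem:simplex}. First note that the hypothesis $c(\ncell) \neq -\infty$ together with Corollary~\ref{c1}~(ii)(a) gives $-\infty < c(+) < c(\ncell)$, so $c(+) \neq -\infty$ and, since $c_{c(+)} = +$ with $c(+) < c(\ncell)$, the index $J(c) = \max\{ i < c(\ncell) \mid c_i = + \}$ is well defined. Moreover $c(\ncell) = \min \supp(c)$, hence $J(c) < \min \supp(c)$ and in particular $J(c) \notin \supp(c)$, so $I := \{ J(c) \} \cup \supp(c)$ has $\# I = \# \supp(c) + 1$.

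Next I would apply Proposition~\ref{pr:crucial} to $c$, obtaining a critical cell $\tilde c = \tilde c_1 \times \cdots \times \tilde c_m \in \Crit(K)$. By the definition of $\tilde c$ the $\ncell$-entries of $c$ are left untouched, so $\supp(\tilde c) = \supp(c)$, while the only $+$-entry of $\tilde c$ is the one in position $J(c)$; thus $\tilde c(+) = J(c)$ and $\tilde c_i = +$ if and only if $i = \tilde c(+)$. Consequently $\tilde c$ meets the hypotheses of Lemma~\ref{lem:simplex}, and the index set that the lemma attaches to $\tilde c$ is exactly $\{ \tilde c(+) \} \cup \supp(\tilde c) = \{ J(c) \} \cup \supp(c) = I$.

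Finally, since $\tilde c \in \Crit(K)$, Lemma~\ref{lem:simplex} forces $K_I = 2^I \setminus \{ I \}$. This equality says precisely that $I$ is not a face of $K$ while every proper subset of $I$ is a face of $K_I$, hence of $K$; that is, $I = \{ J(c) \} \cup \supp(c)$ is a minimal non-face of $K$, as claimed. There is essentially no obstacle here: the statement is a formal consequence of the two cited results, and the only point requiring a little care is verifying that the passage from $c$ to $\tilde c$ preserves the support and places the unique remaining $+$ at $\tilde c(+)$, so that the index set produced by Lemma~\ref{lem:simplex} is forced to coincide with $\{ J(c) \} \cup \supp(c)$.
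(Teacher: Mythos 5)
Your proof is correct and takes exactly the same route as the paper: pass from $c$ to the cell $\tilde c$ given by Proposition~\ref{pr:crucial}, observe that $\tilde c$ satisfies the hypothesis of Lemma~\ref{lem:simplex} with $\supp(\tilde c)=\supp(c)$ and $\tilde c(+)=J(c)=J(\tilde c)$, and read off $K_I=2^I\setminus\{I\}$. The only difference is that you make the bookkeeping explicit (why $J(c)$ is well defined and lies outside $\supp(c)$), which the paper leaves implicit.
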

  \begin{proof}
     Let $\tilde{c}$ be as in \ref{pr:crucial}. Then \ref{pr:crucial} implies that $\tilde{c} \in \Crit(K)$.
     The construction of $\tilde{c}$ assures that we can apply \ref{lem:simplex}. Hence $J(\tilde{c}) \cup \supp(\tilde{c})$ is a
     minimal nonface of $K$. Since $J(c) = J(\tilde{c})$ and $\supp(c) = \supp(\tilde{c})$ by the construction of
     $\tilde{c}$ the assertion follows.
  \end{proof}

  \begin{Proposition}
    \label{pr:trivial}
    Let $K$ be a simplicial complex on ground set $[m]$ with
    vertex decomposable Alexander dual $K^\dual$ and let $m$ be a shedding vertex for $K^\dual$.
    Then for any $n \geq 1$ the inclusion
    $i : \zZ_{\link_K(m)}(D^n,S^{n-1}) \hookrightarrow \zZ_{K \setminus m}(D^n,S^{n-1})$ is
    homotopically trivial.
  \end{Proposition}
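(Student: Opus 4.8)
The plan is to produce an explicit contraction of $\zZ_{\link_K(m)}(D^n,S^{n-1})$ inside $\zZ_{K\setminus m}(D^n,S^{n-1})$, built up coordinate by coordinate and exploiting that every singleton is a face of $K$. Write $L:=\link_K(m)$ and $D:=K\setminus m$, both regarded as complexes on $[m-1]$; then $L\subseteq D$ and the map in question is the induced inclusion $\zZ_L\hookrightarrow\zZ_D$. Fix a basepoint $s\in S^{n-1}$ and set $p:=(s,\ldots,s)\in\zZ_L\subseteq\zZ_D$, which is a legitimate point because $\emptyset$ is a face of $L$ and of $D$. I want to deform the inclusion, within $\zZ_D$, to the constant map at $p$, and the key observation is that for every $i\in[m-1]$ the face $\{i\}$ lies in $K$, hence in $D$, so the single coordinate $i$ may be moved freely through $D^n$ as long as all other coordinates lie in $S^{n-1}$; the standing hypothesis $\{i\}\in K$ for all $i$ is essential precisely here.

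Concretely, one carries out the deformation in two phases. In the first phase one retracts, for a point lying in the cell $(X,A)^\sigma$ of $\zZ_L$, the coordinates indexed by $\sigma\in L$ to $s$, treating them one at a time so that while a coordinate $v\in\sigma$ is being moved the point still lies in $(X,A)^{\sigma'}$ for the shrinking face $\sigma'\subseteq\sigma$, which keeps the path inside $\zZ_D$ since $\sigma'\in L\subseteq D$; after this phase every coordinate lies in $S^{n-1}$. In the second phase one moves each remaining coordinate $i\in[m-1]$ to $s$ along a path through $D^n$, one at a time: while coordinate $i$ is in motion all other coordinates already lie in $S^{n-1}$, so the path lies in $(X,A)^{\{i\}}$ and stays in $\zZ_D$ because $\{i\}\in D$. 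Concatenating the two phases gives the null-homotopy. The role of the hypotheses on $K^\dual$ is to make the first phase well defined and continuous over all of $\zZ_L$ at once — one must fix the order in which coordinates are freed and contracted so that every intermediate configuration of a point of $\zZ_L$ is uniformly supported on a face of $D$ — and the supply of such witnessing faces is governed by the minimal non-faces of $L$. This is where the shedding property enters, through the observation that every minimal non-face $N$ of $L$ in fact lies in $K$ and that $N\cup\{m\}$ is then a minimal non-face of $K$: otherwise $N$ would be a minimal non-face of $D$ all of whose proper subsets lie in $L\subseteq D$, which by \ref{rem:general} would exhibit a common facet of $(K\setminus m)^\dual=\link_{K^\dual}(m)$ and $\link_K(m)^\dual=K^\dual\setminus m$, contradicting that $m$ is a shedding vertex of $K^\dual$. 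Combined with \ref{lem:simplex}, \ref{cor:nonface} and \ref{c1}, which describe the critical cells of $\zZ_L$ in terms of minimal non-faces and of the positions of the plus- and $\ncell$-coordinates, this pins down exactly which faces of $D$ are available along the deformation.

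An alternative, and possibly cleaner, route is to phrase the same idea combinatorially: by \ref{t3} and \ref{p2} it suffices to understand how the cells of $\zZ_L$ sit inside $\zZ_D$ relative to the acyclic matching $\mM_{K\setminus m}$ and to check that the induced map on Morse complexes is homotopic to a constant. Either way, I expect the main obstacle to be the bookkeeping in the first phase — respectively, the compatibility of $\mM_L$ with $\mM_{K\setminus m}$ — namely the verification that every intermediate configuration arising from a point of $\zZ_L$ is supported on a face of $D$; this is exactly the step that breaks down when $m$ is not a shedding vertex of $K^\dual$, and it is the point at which the full strength of vertex-decomposability of $K^\dual$ is used rather than, say, mere shellability.
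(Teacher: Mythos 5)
Your approach --- produce an explicit null-homotopy coordinate by coordinate --- is genuinely different from the paper's. The paper never writes down a homotopy: it enlarges $\zZ_{\link_K(m)}(D^n,S^{n-1})$ to a CW-subcomplex $Y\subseteq\zZ_{K\setminus m}(D^n,S^{n-1})$ by adjoining, for each nontrivial critical cell $c$ of $\mM_{\link_K(m)}$, the single cell $c^\ncell$ obtained by promoting the $+$-coordinate at position $J(c)$ to $\ncell$; \ref{cor:nonface} together with the shedding hypothesis shows these cells lie in $\zZ_{K\setminus m}$, and an acyclic matching on $Y$ with a unique critical cell then shows $Y$ is contractible, so the inclusion factors through a contractible subcomplex.

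There is a genuine gap in your argument, and you locate it correctly but do not close it. Phase 1, as described, is not a continuous map: the prescription ``move the coordinates indexed by $\sigma$ to $s$'' depends on the support $\sigma=\{i:x_i\notin S^{n-1}\}$ of the point, which jumps as $x$ varies across cell boundaries, so the set of coordinates being moved is itself a discontinuous function of the point, and no reordering of coordinates repairs this. Saying one ``must fix the order \dots\ so that every intermediate configuration is uniformly supported on a face of $D$'' names the difficulty rather than resolving it, and the appeals to \ref{lem:simplex}, \ref{cor:nonface}, \ref{c1} do not by themselves produce a continuous deformation. In addition, phase 2 moves each coordinate through the interior of $D^n$ and hence needs $\{i\}\in K\setminus m$ for every $i\in[m-1]$; that is a standing hypothesis of \ref{t1} but is not among the hypotheses of \ref{pr:trivial}, and the paper's proof of this proposition does not use it. Your combinatorial observation --- that shedding forces every minimal nonface of $\link_K(m)$ to be a face of $K\setminus m$, via \ref{rem:general} --- is correct and is precisely the engine of the paper's proof (there applied, via \ref{cor:nonface}, to the sets $\{J(c)\}\cup\supp(c)$ attached to critical cells), but to convert it into a proof one must use it to build a contractible intermediate subcomplex as the paper does, rather than a pointwise deformation. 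The ``alternative, cleaner route'' you mention at the end (compare $\mM_{\link_K(m)}$ with $\mM_{K\setminus m}$ and check the induced map of Morse complexes is constant) is left unexecuted and also differs from the paper, which introduces a fresh matching on the enlarged $Y$ rather than working with $\mM_{K\setminus m}$.
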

  \begin{proof}
    It is sufficient to find a contractible CW-subcomplex $Y$ of the complex
    $\zZ_{K \setminus m}$, which contains $\zZ_{\link_K(m)}(D^n,S^{n-1})$.
    Let $c \in \Crit(\link_K(m))$. From \ref{cor:nonface} we
    know that $N:= \{ J(c) \} \cup \supp(c)$ is a minimal nonface of $\link_K(m)$.
    Thus by \ref{rem:general} $[m-1] \setminus N$ is a facet of
    $(\link_K(m))^\dual = K^\dual \setminus m$.
    Since $m$ is a shedding vertex of $K^\dual$ it follows that $[m-1] \setminus N$
    is not a face of $\link_{K^\dual} (m) = (K\setminus m)^\dual$.
    Therefore $\{J(c)\} \cup \{ \supp(c)\}$ must be a face of $K \setminus m$.
    To $c = c_1 \times \cdots \times c_{m-1} \in \Crit(\link_K(m))$ we associate the
    cell $c^\ncell = c_1^\ncell \times \cdots \times c_{m-1}^\ncell$ defined by $c_i^\ncell = c_i$ if
    $i \neq J(c)$ and $c_i^\ncell = \ncell$ if $i = J(c)$.
    Consider
    $$Y := \zZ_{\link_K(m)} \cup \Big\{ c^\ncell~|~c \in \Crit(\link_K(m)) \setminus \{ (-,\ldots,-)\}\Big\}.$$
    Then it is easily checked that $Y$ indeed is a subcomplex of $\zZ_{K \setminus m} (D^n,S^{n-1})$.
    By construction the edge $c^\ncell \rightarrow c$ is in $\mM_Y$ for any critical cell $c \in \Crit(\link_K(m))$.
    Since $\mM_{\link_K(m)} \subseteq \mM_Y$ it follows that $(-,\ldots,-)$ is the only critical cell in $Y$ and
    hence $Y$ is contractible.
  \end{proof}

  Before we can proceed to the proof of \ref{t1} we have to recall some
  basic facts from homotopy theory. For the sake of completeness we provide
  proofs for facts we could not find an explicit reference for.

  The first lemma is an elementary exercise in homotopy theory.

  \begin{Lemma}\label{l5}
    \begin{itemize}
      \item[(i)] If the pairs of CW-complexes $(X,A)$ and $(X',A')$ are relative
        homotopy equivalent, then $X/A$ and $X'/A'$ are homotopy equivalent.
      \item[(ii)] The following homotopy equivalence holds:
        \[S^{p}\times S^{q}/\pt\times S^{q}\simeq S^{p}\vee S^{p+q}.\]
      \item[(iii)] For any CW-complex $X$ the quotient $X \times D^n / X \times \pt$
        is contractible.
     \end{itemize}
  \end{Lemma}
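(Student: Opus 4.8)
The plan is to verify the three items of \ref{l5} separately, all by elementary constructions in homotopy theory, since none requires anything beyond the homotopy extension property and standard cofibration arguments.

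\medskip

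For (i), the key observation is that a relative homotopy equivalence $(X,A) \simeq (X',A')$ is by definition given by maps $f : (X,A) \to (X',A')$ and $g : (X',A') \to (X',A')$ with $g \circ f \simeq \id_X$ and $f \circ g \simeq \id_{X'}$ through maps of pairs. Each such map of pairs sends $A$ into $A'$ (resp. $A'$ into $A$), so $f$ and $g$ descend to maps $\bar f : X/A \to X'/A'$ and $\bar g : X'/A' \to X/A$ of quotients; a pair-homotopy $H_t$ likewise descends, since $H_t(A) \subseteq A'$ for all $t$ means $H_t$ collapses $A$ to the basepoint throughout. Hence $\bar g \circ \bar f \simeq \id$ and $\bar f \circ \bar g \simeq \id$, giving the homotopy equivalence $X/A \simeq X'/A'$. (One should note that for CW-pairs the quotient $X/A$ carries a CW-structure and $A \hookrightarrow X$ is a cofibration, so the quotient maps are well-behaved.)

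\medskip

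For (ii), I would use the standard cofibration picture: with $p_0 \in S^p$ a basepoint, the inclusion $\{p_0\}\times S^q \hookrightarrow S^p \times S^q$ is a cofibration, so $S^p \times S^q / \{p_0\}\times S^q$ is homotopy equivalent to the mapping cone of the inclusion, which is $S^p\times S^q \cup_{\{p_0\}\times S^q} C(S^q)$. Contracting the cone coordinate of the $S^q$ factor and using that $S^p = D^p / \partial D^p$, one identifies this quotient with the half-smash, and a direct cell count (one $0$-cell, one $p$-cell from $S^p \times \pt$, and one $(p+q)$-cell from the product of top cells, with the $q$-cell from $\pt \times S^q$ having been collapsed) together with the triviality of all attaching maps of positive-dimensional cells gives $S^p \vee S^{p+q}$. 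Alternatively, one invokes the well-known splitting $\Sigma(X \times Y) \simeq \Sigma X \vee \Sigma Y \vee \Sigma(X \wedge Y)$ and the fact that $S^p\times S^q/\pt\times S^q$ is the half-smash $S^p \ltimes S^q$, whose reduced homology is that of $S^p \vee S^{p+q}$; since everything here is simply connected (for $p,q \ge 1$) and a wedge of spheres, a homology isomorphism realized by an explicit map is a homotopy equivalence.

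\medskip

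For (iii), since $D^n$ is contractible with a basepoint $\pt \in D^n$ for which $\{\pt\} \hookrightarrow D^n$ is a cofibration (indeed a deformation retract), the projection $X \times D^n \to X \times \pt$ is a homotopy equivalence of pairs $(X\times D^n, X\times \pt) \simeq (X\times\pt, X\times\pt)$, and then part (i) applied to this pair-equivalence gives $X\times D^n / X\times\pt \simeq (X\times\pt)/(X\times\pt) = \pt$; alternatively, the straight-line contraction of $D^n$ to $\pt$ fixes $X\times\pt$ setwise and therefore passes to the quotient, exhibiting a contraction of $X\times D^n/X\times\pt$ directly.

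\medskip

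The only mild subtlety — hardly an obstacle — is making sure in (ii) that the attaching maps really are null-homotopic so that the cell complex one obtains is genuinely a wedge and not some twisted complex; this is where one uses that $S^p\times S^q$ is a product, so the attaching map of the top cell factors through $S^{p-1}\vee S^{q-1}$ in a way that becomes trivial after the collapse. All three parts are then routine.
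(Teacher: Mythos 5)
Your proposal is correct in substance, but it takes a noticeably different route from the paper in parts (i) and (ii). For (i), you work directly with a homotopy inverse in the category of pairs and descend everything to the quotient; the paper instead feeds the diagram $\pt \leftarrow A \to X$ versus $\pt \leftarrow A' \to X'$ into the Gluing Lemma of Welker--Ziegler--\v{Z}ivaljevi\'c. The Gluing Lemma route is slightly more robust: it only requires $h\colon X\to X'$ and $h|_A\colon A\to A'$ to be ordinary homotopy equivalences together with the cofibration $A\hookrightarrow X$, whereas your argument presupposes the stronger form of relative homotopy equivalence in which a pair-level inverse and pair-level homotopies exist (these coincide for CW-pairs, but that is a non-trivial theorem you implicitly invoke). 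For (ii), the paper's proof is a short and elegant identification: $S^p\times S^q/\pt\times S^q$ is recognized as the Thom space of the trivial rank-$p$ bundle over $S^q$, hence the one-point compactification $(S^q\times\RR^p)^\wedge$, and then $(X\times Y)^\wedge = X^\wedge\wedge Y^\wedge$ gives $S^q_+\wedge S^p \simeq (S^q\vee S^0)\wedge S^p = S^{p+q}\vee S^p$. Your cell-count argument reaches the same conclusion but is more ad hoc; in particular the claim that the attaching map of the top cell ``factors through $S^{p-1}\vee S^{q-1}$'' is not quite right --- the attaching map is the Whitehead product $[\iota_p,\iota_q]\colon S^{p+q-1}\to S^p\vee S^q$, and the relevant point is that composing with the collapse $S^p\vee S^q\to S^p$ kills the $\iota_q$ factor, making the product null --- and your alternative via reduced homology plus a Whitehead-theorem argument requires $p,q\geq 1$ and an explicit splitting map you do not construct. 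For (iii) both proofs amount to the same observation (the inclusion $X\times\pt\hookrightarrow X\times D^n$ is a cofibration and a homotopy equivalence, so the quotient, being its mapping cone, is contractible); your straight-line contraction is an equally valid and perhaps more transparent variant. In short: correct, but where the paper leans on standard machinery (Gluing Lemma, one-point compactifications) to keep the argument uniform and short, you reprove the relevant facts by hand, which works but is a bit longer and in one place a bit imprecise.
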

  \begin{proof}
    \begin{itemize}
      \item[(i)]
        Let $h:(X,A)\longrightarrow (X',A')$ be a relative homotopy
        equivalence. Then, the following diagram is commutative:

        \[\begin{array}{ccccc}
          pt&\longleftarrow& A &\stackrel{i}\longrightarrow&X \\
          \downarrow& &\downarrow h|_A& &\downarrow h \\pt&\longleftarrow&
          A'&\stackrel{i'}\longrightarrow&X'\end{array}.\] The statement follows from the
          Gluing Lemma (Lemma 2.4 in \cite{WZZ}).
      \item[(ii)]
        The quotient space $S^{p}\times S^{q}/\pt\times S^{q}$ is a Thom
        space $\tau\epsilon_p$ of the trivial bundle $\epsilon_p$ over the
        sphere $S^{q}$, which is the one-point compactification
        $(S^{q}\times\RR^{p})^{\wedge}$. The lemma follows from the
        identity $(X\times Y)^{\wedge}=X^{\wedge}\wedge
        Y^{\wedge}$.
      \item[(iii)]
         The quotient space $A\times D^{n}/A\times pt$ is homotopy
         equivalent to the mapping cone of the inclusion $A\times
         pt\stackrel{i}\rightarrow A\times D^{n}$, which is homotopy
         trivial. Hence the quotient is contractible.
    \end{itemize}
  \end{proof}

  For the next lemma we need some basic facts from the theory of
  homotopy colimits. We refer the reader to \cite{WZZ} for results
  from that theory that are formulated in combinatorial language and
  for further references. The following lemma is a version of \cite[Lem. 3.3]{GT1}.

  \begin{Lemma}\label{l4}
    Let $A\stackrel{i}\rightarrow X$ and $S\stackrel{j}\rightarrow D$
    be homotopy trivial inclusions. Then the homotopy colimit space
    $Y=\hocolim\{X\times S\stackrel{i\times 1}\longleftarrow A\times
    S\stackrel{1\times j}\longrightarrow A\times D\}$ is homotopy
    equivalent to the wedge of spaces
    \[
      Y\simeq X\times S/\pt\times S\vee\Sigma(A\wedge S)\vee A\times D/A\times \pt.
    \]
  \end{Lemma}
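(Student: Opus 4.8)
The plan is to identify the homotopy colimit $Y$ with a familiar pushout and then simplify the resulting space using the elementary facts collected in \ref{l5}. First I would observe that since the inclusion $i : A \hookrightarrow X$ is homotopy trivial, we may replace it up to homotopy by the inclusion $A \hookrightarrow CA$ of $A$ into its cone; likewise $j : S \hookrightarrow D$ may be replaced by $S \hookrightarrow CS$. Because a homotopy colimit over a fixed diagram shape is invariant under objectwise homotopy equivalences of diagrams, $Y$ is homotopy equivalent to
\[
  Y' = \hocolim\bigl\{ CA \times S \longleftarrow A \times S \longrightarrow A \times CS \bigr\}.
\]
For a diagram of cofibrations the homotopy colimit agrees with the ordinary (strict) colimit, i.e. with the double mapping cylinder, which in this case is just the pushout $CA \times S \cup_{A \times S} A \times CS$. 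So the first step reduces the statement to computing the homotopy type of this pushout.

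Next I would analyze the pushout directly. The space $CA \times S \cup_{A \times S} A \times CS$ sits naturally inside $CA \times CS$, and its complement deformation retracts appropriately; more usefully, one recognizes this pushout as the quotient of $CA \times CS$ obtained by collapsing the ``outer corner'' $CA \times CS$ does not appear — rather, the union of the two pieces glued along $A \times S$ is, up to homotopy, the join-type construction. Concretely, $CA \times S \cup_{A\times S} A \times CS$ is homotopy equivalent to $\Sigma(A \wedge S) \vee (CA \times S / \mathrm{pt} \times S) \vee (A \times CS / A \times \mathrm{pt})$ once one splits off the contractible cones. The cleanest route: filter $Y'$ by the subspace $CA \times S$, whose quotient is $CA \times CS / CA \times \mathrm{pt} \simeq \mathrm{pt}$ by \ref{l5}(iii) after noting $CA \simeq \mathrm{pt}$; then one has a cofibration sequence whose connecting data is controlled by the two homotopy-trivial inclusions, and the Puppe sequence splits because each inclusion is null-homotopic. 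I would use \ref{l5}(i) to replace $(CA \times S, A \times S)$ and $(A \times CS, A \times S)$ by their homotopy-equivalent models and then identify the cofibers as $CA \times S / A \times S \simeq X \times S / \mathrm{pt} \times S$ (using $i$ homotopy trivial and \ref{l5}(i) again with the pair $(X,A) \simeq (CA, A)$), as $A \times CS / A \times S \simeq A \times D / A \times \mathrm{pt}$, with the extra $\Sigma(A \wedge S)$ term arising from the ``diagonal'' cell created by gluing the two cones along $A \times S$, exactly as in the classical formula $CA \cup_A CS$-type computations and as in \cite[Lem. 3.3]{GT1}.

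The main obstacle I anticipate is making the splitting of the relevant cofiber sequence precise — that is, showing the pushout actually decomposes as a \emph{wedge} rather than merely having the homology of a wedge. The key input is that both $i$ and $j$ are null-homotopic, which lets one choose nullhomotopies giving explicit sections; concretely, a nullhomotopy $H : A \times I \to X$ of $i$ provides a map $CA \to X$ extending $i$, and similarly for $j$, and these extensions furnish a map from the pushout $Y'$ to the wedge realizing the splitting on the nose up to homotopy. I would verify that the composite $Y' \to (\text{wedge}) \to Y'$ is homotopic to the identity by a straightforward cell-by-cell check, or alternatively invoke the Gluing Lemma (Lemma 2.4 in \cite{WZZ}) twice — once to reduce to cones, once to assemble the wedge — to avoid explicit cell chasing. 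The dimension bookkeeping, namely that the diagonal term is precisely $\Sigma(A \wedge S)$ and not a shift thereof, follows from the standard identification of the double cone $CA \cup_A CS$ with the (unreduced) join $A * S \simeq \Sigma(A \wedge S)$.
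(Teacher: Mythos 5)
Your very first reduction is where the argument breaks. You assert that, because $i\colon A\hookrightarrow X$ and $j\colon S\hookrightarrow D$ are null-homotopic, the diagram $\{X\times S\leftarrow A\times S\rightarrow A\times D\}$ may be replaced by $\{CA\times S\leftarrow A\times S\rightarrow A\times CS\}$ by homotopy invariance of $\hocolim$. That invariance requires a natural transformation of diagrams which is an \emph{objectwise} homotopy equivalence, and there is none here: $X$ is in general not homotopy equivalent to $CA$ (which is contractible). Null-homotopy of $i$ gives an extension $CA\to X$, and hence a map of diagrams, but not an equivalence of diagrams. Concretely, take $A=S^1$ and $X=D^2\vee S^3$ with $i$ the boundary inclusion into the $D^2$ summand; then $i$ is null-homotopic, but $X$ is not contractible and $(X,A)$ is not relative homotopy equivalent to $(CA,A)=(D^2,S^1)$. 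This also shows the later appeal to ``$(X,A)\simeq(CA,A)$'' is false. Your proposed route cannot be patched: after the replacement, the $X\times S/\pt\times S$ summand degenerates to $CA\times S/\pt\times S\simeq\pt$, so the contribution of $X$ is irretrievably lost and the stated conclusion cannot come out.

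The correct move, and the one the paper makes, keeps $X$ and $D$ but replaces the \emph{maps} $i$ and $j$ by constant maps. This is legitimate because $\hocolim$ over a fixed shape depends only on the homotopy classes of the structure maps, not on choices of contractible replacements of the objects. The resulting double mapping cylinder $X\times S\,\cup\,A\times S\times[0,1]\,\cup\,A\times D$ (with the two ends collapsed along the constant maps) is then rewritten as a colimit $X\times S\leftarrow S\rightarrow A\ast S\leftarrow A\rightarrow A\times D$ of cofibrations, with the end copies of $S$ and $A$ sitting inside the join $A\ast S$. Collapsing those two ends of the join gives $A\ast S\to\Sigma(A\wedge S)$, a homotopy equivalence, under which the maps from $S$ and $A$ become constant; the Gluing Lemma then splits the colimit into the three wedge summands $X\times S/\pt\times S$, $\Sigma(A\wedge S)$, and $A\times D/A\times\pt$. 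Your intuition about the join/unreduced suspension appearing from the ``diagonal'' gluing is correct, but the reduction to cones at the start is the step that has to be abandoned, and the argument should instead be run with $X$ and $D$ intact.
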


  \begin{proof}
    By the assumption for the maps $i$ and $j$, we have
    \[Y\simeq \hocolim\{X\times S\stackrel{\pt\times 1}\longleftarrow A\times
    S\stackrel{1\times \pt}\longrightarrow A\times D\}.\] Let $A\ast
    S=A\times S\times[0,1]/\sim$ be the join of the spaces $A$ and
    $S$, where $(x,y_1,0)\sim(x,y_2,0), (x_1,y,1)\sim(x_2,y,1)$. The
    spaces $A$ and $S$ are embedded into the join by $i_2:S\rightarrow
    A\ast S$ and $j_2:A\rightarrow A\ast S$, where $i_2(y)=[(x,y,1)],
    y\in S$ and $j_2(x)=[(x,y,0)], x\in A$.

    The space $Y$ is homotopy equivalent to the colimit space

    \[Y\simeq \colim\{X\times S\stackrel{i_1}\longleftarrow S\stackrel{i_2}\longrightarrow A\ast
    S\stackrel{j_2}\longleftarrow A\stackrel{j_1}\longrightarrow
    A\times D\},\]where $i_1:S\rightarrow X\times S$ and
    $j_1:A\rightarrow A\times D$ are inclusions $i_1(y)=(\pt,y), y\in
    S$ and $j_1(x)=(x,\pt), x\in A$. Since there is a quotient map
    $p:A\ast S\rightarrow \Sigma(A\wedge S)$, which is a homotopy
    equivalence, we obtain

    \[
      Y\simeq \colim\{X\times S\stackrel{j_2}\longleftarrow
      S\stackrel{\pt}\longrightarrow\Sigma(A\wedge
      S)\stackrel{\pt}\longleftarrow A\stackrel{i_2}\longrightarrow
      A\times D\}.
    \]
  \end{proof}

  Now we are in position to provide the proof of \ref{t1}.

  \begin{proof}[Proof of \ref{t1}]
    \noindent {\sf Wedge of Spheres:}
    We prove by induction on the number $m$ of vertices of the simplicial complex $K$ that if
    $K^\dual$ is vertex decomposable then:
    $\zZ_K(D^n,S^{n-1})$ is homotopy equivalent to
    a wedge of spheres.

    For $m=1$ any simplicial complex has vertex decomposable Alexander dual and there is either no or exactly
    one critical cell different from $(-,\ldots,-)$ in $\Crit(K)$. Thus it follows immediately that $\zZ_K(D^n,S^{n-1})$ is homotopy
    equivalent to the wedge of spheres given by \eqref{eq:wedge}.

    Assume $m \geq 2$.
    We first treat the case when $K^\dual$ is a simplex over some subset $\Omega$ of the ground set $[m]$. In this case
    $K = 2^\Omega \join (2^{[m] \setminus \Omega} \setminus \{ [m] \setminus \Omega \}$.
    Thus $$\zZ_K(D^n,S^{n-1}) = (D^n)^{m-r} \times ((D^{n})^r \setminus \{(x_1,\ldots,x_m)\in (D^n)^r ~|~x_i \not\in S^{n-1} \})$$
    for $r = m - \# \Omega$. Thus $\zZ_K(D^n,S^{n-1})$ is homotopy equivalent to a $(nr-1)$-sphere.

    Next we consider the case when $K^\dual$ is vertex decomposable with shedding vertex $m$.
    The induction hypothesis asserts that for any simplicial
    complex $L$ on $k<m$ vertices with vertex-decomposable Alexander dual simplicial complex the space $\zZ_K(D^n,S^{n-1})$ is homotopy
    equivalent to a wedge of spheres given by \eqref{eq:wedge}.

    The space $\zZ_K(D^n,S^{n-1})$ can be represented as a colimit of the following diagram

    \begin{diagram}
      \label{di:colim}
                                                 &                        & \zZ_{\overline{\link_K(m)}}(D^n,S^{n-1}) &           & \\
                                                 & \ldTo^{i}              &                                          & \rdTo^{j} & \\
      \zZ_{\overline{K\setminus m}}(D^n,S^{n-1}) &                        &                                          &           & \zZ_{\star_K(m)}(D^n,S^{n-1}) \\
    \end{diagram}

    where $i$ and $j$ are inclusions and for a simplicial complex $L$ on
    $[m-1]$ we denote by $\overline{L}$ the complex $L$ considered as a simplicial
    complex over $[m]$.
    We have the following identities:

    \begin{eqnarray*}
      \zZ_{\overline{K\setminus m}}(D^n,S^{n-1}) & = & \zZ_{K\setminus m}(D^n,S^{n-1}) \times S^{n-1}, \\
      \zZ_{\overline{\link_K(m)}}(D^n,S^{n-1}) & = & \zZ_{\link_K(m)}(D^n,S^{n-1})\times S^{n-1}, \\
      \zZ_{\star_K(m)}(D^n,S^{n-1}) & = & \zZ_{\link_K(m)}(D^n,S^{n-1}) \times D^n
    \end{eqnarray*}

    \noindent Substituting in \eqref{di:colim} and using the Projection Lemma it follows that
    $\zZ_K(D^n,S^{n-1})$ is homotopy equivalent to the homotopy colimit of the following diagram

    \begin{diagram}
      \label{di:holim}
                                                     &                      & \zZ_{\link_K(m)}(D^n,S^{n-1}) \times S^{n-1} &                       & \\
                                                     & \ldTo^{i' \times \id} &                                              & \rdTo^{\id \times j'} & \\
      \zZ_{K\setminus m}(D^n,S^{n-1}) \times S^{n-1} &                      &                                              &            & \zZ_{\link_K(m)}(D^n,S^{n-1}) \times D^n\\
    \end{diagram}
    where $i=i'\times \id, j=\id\times j'$ and $i', j'$ are the inclusion maps.
    Clearly, the map $j'$ is homotopy equivalent to the constant map. By \ref{pr:trivial}
    the map $i'$ is also homotopy equivalent to the constant map. Hence
    \ref{l4} applies and shows that $\zZ_K(D^n,S^{n-1})$ is homotopy
    equivalent to a wedge of the following three spaces:
    \begin{itemize}
       \item[(a)] $\Big(\zZ_{K\setminus m}(D^n,S^{n-1}) \times S^{n-1}\Big) / \Big(\pt \times S^{n-1}\Big)$

          By induction and \ref{l3} we know that $\zZ_{K\setminus m}(D^n,S^{n-1})$ is homotopy equivalent to a
          wedge of spheres. Thus $\zZ_{K\setminus m}(D^n,S^{n-1}) \times S^{n-1}$
          is homotopy equivalent to a wedge of spheres times $S^{n-1}$ by a homotopy that is
          the identity on $\pt \times S^{n-1}$. But then there is a homotopy to a wedge of
          products of a sphere with $S^{n-1}$ quotient by $\pt \times S^{n-1}$.
          The latter is by \ref{l5} (i) homotopy equivalent to a wedge of spheres,

       \item[(b)] $\Sigma \Big( \zZ_{\link_K(m)}(D^n,S^{n-1}) \wedge S^{n-1}\Big)$

         Assume there are $\ell$ elements from $[m]$ such that $\{ \ell\}$ is not a face of $\link_K(m)$. Then by induction and \ref{l3} $\zZ_{\link_K(m)}(D^n,S^{n-1})$
         is homotopy equivalent to the product of $(S^{n-1})^\ell$ and a wedge of spheres
         given by \eqref{eq:wedge}.
         Since the smash product with $S^{n-1}$ is the $(n-1)$-fold suspension it follows that
         the space is homotopy equivalent to an $n$-fold suspension of a product of spheres and a wedge of spheres for some $n \geq 1$.
         Elementary homotopy arguments that show that it is homotopy equivalent to a wedge of spheres.

       \item[(c)] $\Big(\zZ_{\link_K(m)}(D^n,S^{n-1}) \times D^n \Big)/ \Big( \zZ_{\link_K(m)}(D^n,S^{n-1}) \times \pt \Big)$.

         This is contractible by \ref{l5} (iii).
    \end{itemize}
    Hence $\zZ_K(D^n,S^{n-1})$ is homotopy equivalent to a wedge of spheres.

    \noindent {\sf Counting the Spheres:}
    In the second part of the proof we verify the exact formula for the number of spheres.
    Since we know that $\zZ_K(D^n, S^{n-1})$ is homotopy equivalent to a wedge of spheres
    the rank of $\widetilde{\Homology}^i(\zZ_K(D^n,S^{n-1});\KK)$ counts the number
    of $i$-spheres in the wedge for any field $\KK$.

    In order to compute $\widetilde{\Homology}^i(\zZ_K;\KK)$ we now invoke
    \ref{pr:susp}. This immediately completes the proof for $n \geq 2$. For $n=1$
    the result follows from \cite[Thm. 3.1]{GPW} and the Hochster formula for
    the Betti numbers of the minimal free resolution of a monomial ideal (see
    \cite{H} for the original proof or \cite[p.226]{ER} for its statement).
  \end{proof}

  Using the notation from \ref{p2} we obtain the following corollary.

  \begin{Corollary} \label{co:appl} Let $K$ be a simplicial complex on $[m]$ such that the
     Alexander dual complex $K^\dual$ is vertex-decomposable. Then
     \begin{itemize}
       \item[(i)] If $m$ is a shedding vertex of $K^\dual$ then $$\Crit_m =
           \Crit_{m-1}^{-} \cup \Crit_{m-1}^+ \cup \Crit_{m-1\bullet}^\bullet \setminus \{ (-,\ldots,-,+) , (-,\ldots,-,\bullet) \}.$$
       \item[(ii)] \begin{eqnarray*} \widetilde{\Homology}^*(\zZ_K(D^n,S^{n-1})) & \cong & \widetilde{\Homology}^{*} (\zZ_{K \setminus m } (D^n, S^{n-1})) \oplus \\
                                                                                 &       & \widetilde{\Homology}^{*-n+1} (\zZ_{K \setminus m } (D^n, S^{n-1})) \oplus \\
                                                                                 &       & \widetilde{\Homology}^{*-n}(\zZ_{\link_K(m)}(D^n,S^{n-1})).
                   \end{eqnarray*}
       \item[(iii)] $$\zZ_K(D^n,S^{n-1}) \simeq \bigvee_{c \in \Crit(K)} S^{\dim(c)}.$$
     \end{itemize}
   \end{Corollary}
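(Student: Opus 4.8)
The plan is to derive (i) by specialising the recursion of \ref{p2}~(ii) to $k=m-1$, which reads
$$\Crit_m = \Crit_{m-1}^- \cup \Crit_{m-1}^+ \cup \Crit_{m-1\ncell}^\ncell \setminus \big(\Crit_{m-1}^\ncell \cup \Crit_{m-1\ncell}^+\big),$$
with $\Crit_{m-1} = \Crit(K\setminus m)$ and $\Crit_{m-1\ncell} = \Crit(\link_K(m))$. Every cell of $\Crit_{m-1}^\ncell$ ends in $e_\ncell^n$, so it meets the union only inside $\Crit_{m-1\ncell}^\ncell$, hence only in $\big(\Crit(K\setminus m)\cap\Crit(\link_K(m))\big)^\ncell$; likewise $\Crit_{m-1\ncell}^+$ meets the union only in $\big(\Crit(K\setminus m)\cap\Crit(\link_K(m))\big)^+$. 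Thus (i) follows once I show $\Crit(K\setminus m)\cap\Crit(\link_K(m)) = \{(-,\ldots,-)\}$. One inclusion is immediate. For the other, take $c\neq(-,\ldots,-)$ in the intersection; then $c(+)\neq-\infty$ by \ref{c1}~(iii). Since $K\setminus m$ and $\link_K(m)$ have vertex-decomposable duals by \ref{l3}, I would apply \ref{pr:crucial} over each to replace $c$ by a cell $\tilde c$ with a single plus-cell, in position $J(c)$, still lying in $\Crit(K\setminus m)$ and in $\Crit(\link_K(m))$. Then \ref{lem:simplex}, applied to $\tilde c$ with $I=\{J(c)\}\cup\supp(c)$, shows $I$ is a minimal non-face of both $K\setminus m$ and $\link_K(m)$, so by \ref{rem:general} the set $[m-1]\setminus I$ is a facet of both $(K\setminus m)^\dual = \link_{K^\dual}(m)$ and $(\link_K(m))^\dual = K^\dual\setminus m$, contradicting that $m$ is a shedding vertex of $K^\dual$.

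\textbf{Part (ii).} Here I would read the answer off the wedge decomposition constructed in the proof of \ref{t1}: for a shedding vertex $m$ of $K^\dual$ that proof realises $\zZ_K(D^n,S^{n-1})$ as homotopy equivalent to a wedge of three spaces, one of which is contractible and the other two of which are $X := \big(\zZ_{K\setminus m}(D^n,S^{n-1})\times S^{n-1}\big)/\big(\pt\times S^{n-1}\big)$ and $Y := \Sigma\big(\zZ_{\link_K(m)}(D^n,S^{n-1})\wedge S^{n-1}\big)$. Hence $\widetilde{\Homology}^*(\zZ_K(D^n,S^{n-1})) \cong \widetilde{\Homology}^*(X)\oplus\widetilde{\Homology}^*(Y)$. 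Projection onto the $S^{n-1}$-factor splits the inclusion $\pt\times S^{n-1}\hookrightarrow\zZ_{K\setminus m}(D^n,S^{n-1})\times S^{n-1}$, so the cofibre sequence and the reduced Künneth formula give $\widetilde{\Homology}^*(X)\cong\widetilde{\Homology}^*(\zZ_{K\setminus m}(D^n,S^{n-1}))\oplus\widetilde{\Homology}^{*-n+1}(\zZ_{K\setminus m}(D^n,S^{n-1}))$, while $\widetilde{\Homology}^*(Y)\cong\widetilde{\Homology}^{*-n}(\zZ_{\link_K(m)}(D^n,S^{n-1}))$. Adding these two contributions gives (ii); if $m$ is not a shedding vertex of $K^\dual$ one first relabels so that it is.

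\textbf{Part (iii).} The point is that (i) and (ii) are, numerically, the same recursion. In (i) the surviving pieces $\Crit_{m-1}^-$, $\Crit_{m-1}^+$, $\Crit_{m-1\ncell}^\ncell$ consist of cells whose dimension exceeds that of the underlying cell of $K\setminus m$, $K\setminus m$, $\link_K(m)$ by $0$, $n-1$, $n$ respectively (since $\dim e_-^0 = 0$, $\dim e_+^{n-1}=n-1$, $\dim e_\ncell^n=n$), while the two deleted cells are the all-minus cell of $\Crit(K\setminus m)$ tagged $+$ (dimension $n-1$) and the all-minus cell of $\Crit(\link_K(m))$ tagged $\ncell$ (dimension $n$). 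Writing $c_i(L)$ for the number of $i$-dimensional cells of $\Crit(L)$, equation (i) therefore reads $c_i(K) = c_i(K\setminus m) + \big(c_{i-n+1}(K\setminus m)-\delta_{i,n-1}\big) + \big(c_{i-n}(\link_K(m))-\delta_{i,n}\big)$; the reduced counts $c_i(L)-\delta_{i,0}$ then obey exactly the recursion (ii) for $\dim_\KK\widetilde{\Homology}^i(\zZ_L(D^n,S^{n-1});\KK)$, the terms $-\delta_{i,n-1}$ and $-\delta_{i,n}$ absorbing the $\delta_{i,0}$-contributions of the all-minus cells of $K\setminus m$ and $\link_K(m)$. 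The base case $m=1$ is checked directly from \ref{p2}~(i) ($\zZ_K(D^n,S^{n-1})$ being $D^n$ or $D^n\setminus\{0\}\simeq S^{n-1}$, and the all-minus cell being the only dimension-$0$ critical cell), so induction on $m$, legitimate by \ref{l3}, yields $c_i(K) = \dim_\KK\widetilde{\Homology}^i(\zZ_K(D^n,S^{n-1});\KK)+\delta_{i,0}$ for every field $\KK$. By \ref{t1} the space $\zZ_K(D^n,S^{n-1})$ is already a wedge of spheres, with exactly $\dim_\KK\widetilde{\Homology}^i$ spheres in dimension $i$, so it is homotopy equivalent to $\bigvee_{c\in\Crit(K)}S^{\dim c}$, the all-minus critical cell contributing the base point.

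\textbf{Expected obstacle.} The hard part will be the set identity $\Crit(K\setminus m)\cap\Crit(\link_K(m))=\{(-,\ldots,-)\}$ inside (i): this is the one place where the shedding hypothesis on $K^\dual$ is genuinely used, and it requires first normalising an arbitrary critical cell to one with a single plus-cell (via \ref{pr:crucial}) before \ref{lem:simplex} and \ref{rem:general} can convert ``critical in both the deletion and the link'' into ``a common facet of $\link_{K^\dual}(m)$ and $K^\dual\setminus m$''. Everything else — matching the three shifts $0,n-1,n$ of (i) with the summands of (ii), and tracking the all-minus cell carefully so that the two cells removed in (i) account exactly for the gap between reduced and unreduced cell counts — is routine bookkeeping.
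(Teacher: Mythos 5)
Your proposal is correct and follows essentially the same route as the paper: part (i) by specialising \ref{p2}~(ii) to $k=m-1$ and showing $\Crit(K\setminus m)\cap\Crit(\link_K(m))=\{(-,\ldots,-)\}$ via the shedding hypothesis, part (ii) by reading the three wedge summands off the proof of \ref{t1}, and part (iii) by matching the cell-count recursion of (i) against the Betti-number recursion of (ii) and invoking \ref{t1}. The one small deviation, that you re-derive the minimal non-face conclusion directly from \ref{pr:crucial} and \ref{lem:simplex} instead of citing \ref{cor:nonface} as the paper does, is if anything slightly more careful, since \ref{cor:nonface} is stated only for $c(\ncell)\neq -\infty$ while \ref{lem:simplex} also covers the $\supp(c)=\emptyset$ case.
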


   \begin{proof}
     \begin{itemize}
       \item[(i)] Suppose that $c \in \Crit_{m-1} \cap \Crit_{m-1\bullet}$. Then by
         \ref{cor:nonface} we have that $N = \{J(c)\} \cup \supp(c)$ is a minimal nonface of both $K\setminus m$
         and $\link_K(m)$. Thus $[m]\setminus N$ is a facet of both $K^\dual \setminus m$ and $\link_{K^\dual} (m)$,
         which contradicts the condition that $m$ is a shedding vertex of $K^\dual$. We obtain that $\Crit_{m-1} \cap
         \Crit_{m-1\bullet} = \{(-,\ldots,-)\}$ and the statement follows from \ref{p2}.
      \item[(ii)] The assertion follows directly from homotopy decomposition given in the proof of \ref{t1}.
      \item[(iii)] The assertion follows from (i) and (ii) by induction on the number of vertices using the
         fact that the Morse matching reduces accordingly.
     \end{itemize}
   \end{proof}

  Finally, we provide the proofs of \ref{co:flag} and \ref{co:susp}.

  \begin{proof}[Proof of \ref{co:flag}]
    By \cite[Prop. 8]{ER} we know that if $K^{(1)}$ is chordal then $K^\dual$ is vertex-decomposable.
    Since $K$ is flag we have $\{i\} \in K$ for all $i \in [m]$. Hence the result follows from
    \ref{t1}.
  \end{proof}

  \begin{proof}[Proof of \ref{co:susp}]
    If we set $n = 2$ in \ref{pr:susp} then we obtain a homotopy equivalence
    $$\Sigma \zZ_K(D^n,S^{n-1}) \simeq \bigvee_{{{M \not\in K} \atop {M \subseteq [m]}}} \Sigma^{\#M +2} |K_M|.$$
    Now the assertion follows from \ref{t1}.
  \end{proof}

  Note that if \ref{pr:susp}, respectively \cite[Cor. 2.24]{BBCG}, holds for $n \geq 1$ then one can
  strengthen \ref{co:susp} to the assertion that the double suspension is homotopy equivalent to a wedge of spheres.

  We finish with a few examples that illustrate our constructions and results.

  \begin{Example}
    \label{ex:ex51}
    Let $K$ be a simplicial complex with vertex decomposable dual on the vertex
    set $V=[6]$, such that the set of facets of $\link_K(6)$ is
    $\{125,134,145,234,235\}$. The
    geometric realization $|\link_K(6)|$ is the M\"obius band.
    The Alexander dual complex $\link_K(6)^{\dual}=K^{\dual}\setminus 6$ is the
    $5$-gon $\{12,13,24,35,45\}$. Using \ref{p2},
    we obtain $\Crit(\link_K(6))=\{
    (-,-,-,-,-), (+,\ncell,\ncell,-,-), (+,\ncell,\ncell,+,-), (+,\ncell,-,\ncell,-), (+,\ncell,\ncell,-,+),
    (+,\ncell,\ncell,+,+),\newline (+,\ncell,-,\ncell,+), (+,-,\ncell,-,\ncell),
    (+,-,\ncell,+,\ncell), (-,+,+,\ncell,\ncell), (-,-,+,\ncell,\ncell),
    (-,+,-,\ncell,\ncell)\}.$

    \noindent The necessary condition that $6$ is the shedding vertex of $K^{\dual}$ is that there is no $1$-simplex in
    $\link_{K^{\dual}}(6)=(K\setminus 6)^{\dual}$.
    The minimal complex which satisfies this condition has to contain
    all $2$-faces on the vertex set $[5]$ and hence ${[5]\choose 3}\subset
    K\setminus 6$. This shows that $\{J(c)\}\cup \supp(c)\in K\setminus
    6$, for all $c\in \Crit(\link_K(6))$.
  \end{Example}

  \begin{figure}
    \vskip6cm
    \hskip-12cm
    \begin{picture}(0,0)%
      \includegraphics{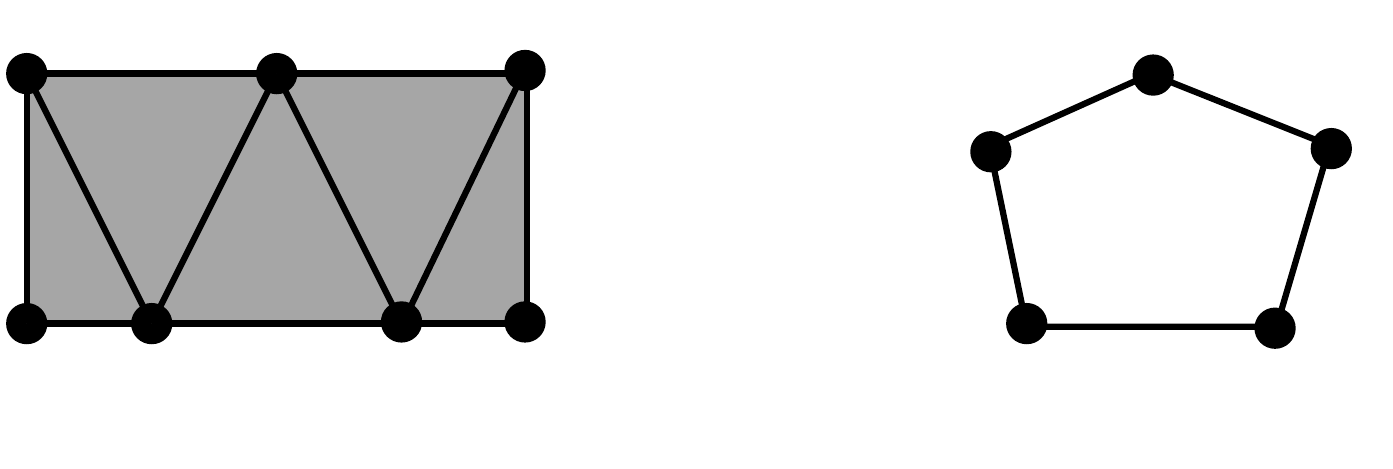}%
    \end{picture}%
    \setlength{\unitlength}{3947sp}%

    \begingroup\makeatletter\ifx\SetFigFont\undefined%
    \gdef\SetFigFont#1#2#3#4#5{%
    \reset@font\fontsize{#1}{#2pt}%
    \fontfamily{#3}\fontseries{#4}\fontshape{#5}%
    \selectfont}%
    \fi\endgroup%
    \begin{picture}(6678,2242)(1673,-2858)
      \put(2100, 1400){\makebox(0,0)[lb]{\smash{{\SetFigFont{12}{14.4}{\rmdefault}{\mddefault}{\updefault}{\color[rgb]{0,0,0}$1$}}}}}
      \put(3340, 1400){\makebox(0,0)[lb]{\smash{{\SetFigFont{12}{14.4}{\rmdefault}{\mddefault}{\updefault}{\color[rgb]{0,0,0}$2$}}}}}
      \put(3900, -100){\makebox(0,0)[lb]{\smash{{\SetFigFont{12}{14.4}{\rmdefault}{\mddefault}{\updefault}{\color[rgb]{0,0,0}$3$}}}}}
      \put(2100, -100){\makebox(0,0)[lb]{\smash{{\SetFigFont{12}{14.4}{\rmdefault}{\mddefault}{\updefault}{\color[rgb]{0,0,0}$4$}}}}}
      \put(2700, -100){\makebox(0,0)[lb]{\smash{{\SetFigFont{12}{14.4}{\rmdefault}{\mddefault}{\updefault}{\color[rgb]{0,0,0}$5$}}}}}
      \put(4800, -100){\makebox(0,0)[lb]{\smash{{\SetFigFont{12}{14.4}{\rmdefault}{\mddefault}{\updefault}{\color[rgb]{0,0,0}$1$}}}}}
      \put(4800, 1400){\makebox(0,0)[lb]{\smash{{\SetFigFont{12}{14.4}{\rmdefault}{\mddefault}{\updefault}{\color[rgb]{0,0,0}$4$}}}}}
      \put(3100, -300){\makebox(0,0)[lb]{\smash{{\SetFigFont{12}{14.4}{\rmdefault}{\mddefault}{\updefault}{\color[rgb]{0,0,0}$\link_K(6)$}}}}}

      \put(7550, 1400){\makebox(0,0)[lb]{\smash{{\SetFigFont{12}{14.4}{\rmdefault}{\mddefault}{\updefault}{\color[rgb]{0,0,0}$1$}}}}}
      \put(6700,  800){\makebox(0,0)[lb]{\smash{{\SetFigFont{12}{14.4}{\rmdefault}{\mddefault}{\updefault}{\color[rgb]{0,0,0}$2$}}}}}
      \put(8700,  800){\makebox(0,0)[lb]{\smash{{\SetFigFont{12}{14.4}{\rmdefault}{\mddefault}{\updefault}{\color[rgb]{0,0,0}$3$}}}}}
      \put(6900, -100){\makebox(0,0)[lb]{\smash{{\SetFigFont{12}{14.4}{\rmdefault}{\mddefault}{\updefault}{\color[rgb]{0,0,0}$4$}}}}}
      \put(8400, -100){\makebox(0,0)[lb]{\smash{{\SetFigFont{12}{14.4}{\rmdefault}{\mddefault}{\updefault}{\color[rgb]{0,0,0}$5$}}}}}
      \put(7350, -300){\makebox(0,0)[lb]{\smash{{\SetFigFont{12}{14.4}{\rmdefault}{\mddefault}{\updefault}{\color[rgb]{0,0,0}$\link_K(6)^\dual$}}}}}
    \end{picture}%
    \vskip-5cm
    \caption{Complexes from \ref{ex:ex51}}
  \end{figure}

  \begin{Example}
    Let $K_1$ and $K_2$ be simplicial complexes, which are proper
    subcomplexes of two simplices on disjoint ground sets.
    Since the $n$-sphere moment-angle complex corresponding to the
    join $K_1\join K_2$ is the product $\zZ_{K_1\join
    K_2}(X,A)=\zZ_{K_1}(X,A)\times\zZ_{K_2}(X,A)$, it follows from
    \ref{t1} that the dual complex of the join $(K_1\join K_2)^\dual$
    is not vertex decomposable in general.
  \end{Example}

  \begin{Example}
    Let $K$ be the $(k-1)$-skeleton of the
    simplex on $m$ vertices. Then the dual complex $K^\dual$ is
    the $(m-k-1)$-skeleton of the simplex on $m$ vertices.
    It is easily seen that any skeleton of a simplex is vertex decomposable.
    Hence \ref{t1} applies. For any $M \subseteq [m]$ the complex
    $K_M$ is the $(k-1)$-skeleton of the full simplex with vertex set $M$.
    In particular, $K_M$ is the full simplex if $\# M \leq k$.
    If $\#M >k$ then the homology of $K_M$ is of rank ${\# M -i \choose
    k-1}$ in dimension $k-1$.
    If we apply \ref{t1} to our situation then we get
    the following homotopy equivalence which is also an immediate
    consequence of \cite{GT1}, \cite{GT2}
    \[\zZ_K(D^{n}, S^{n-1})\simeq \bigvee_{j=1}^{m-k} {m\choose k+j}{k+j-j \choose k}S^{nk+j(n-1)}.\]
  \end{Example}

\section{Acknowledgements} We thank Kouyemon Iriye and Daisuke Kishimoto for pointing us to a mistake in the
  formulation of \ref{pr:trivial} and a sloppy argument in the proof of \ref{l5} (i).
  We also thank Jelena Grbi\'c for mentioning to us that \ref{t2} also follows easily
  from the fact that $\zZ_K(\bullet,\bullet)$
  is a homotopy functor in both arguments independently.






\bibliographystyle{model1a-num-names}
\bibliography{<your-bib-database>}

\end{document}